\newtheorem{proposition}{Proposition}
\newtheorem{theorem}{Theorem}
\newtheorem{lemma}{Lemma}
\theoremstyle{definition}
\newtheorem{definition}{Definition}
\theoremstyle{remark}
\newtheorem{remark}{Remark}
\DeclareMathOperator{\lip}{Lip}
\DeclareMathOperator{\diam}{diam}
\DeclareMathOperator{\fix}{Fix}
\DeclareMathOperator{\argmin}{argmin}
\title{Generic Convergence of\\Sequences of Successive Approximations\\in Banach~Spaces}
\author{Christian Bargetz\footnote{Department of Mathematics, University of Innsbruck, Technikerstraße 13, 6020 Innsbruck, Austria, \texttt{christian.bargetz@uibk.ac.at}}\and Simeon Reich\footnote{Department of Mathematics, The Technion---Israel Institute of Technology,
32000 Haifa, Israel, \texttt{sreich@technion.ac.il}}}
\begin{document}
\maketitle
\begin{abstract}
  \noindent\textbf{\textsf{Abstract.}} We study the generic behavior of the method of successive approximations for set-valued mappings in Banach spaces. We consider, in particular, the case of those set-valued mappings which are defined by pairs of nonexpansive mappings and give a positive answer to a question raised by Francesco S.~de~Blasi.
  \vskip2mm
  \noindent\textbf{\textsf{Keywords.}} Banach space, generic property, set-valued nonexpansive mapping, successive approximations.
  \vskip2mm
  \noindent\textbf{\textsf{Mathematics Subject Classifications.}} 47H04, 47H09, 47H10, 54E52
\end{abstract}

\section{Introduction}

In view of the absence of Brouwer's fixed point theorem for bounded, closed and convex subsets of infinite-dimensional Banach spaces, there has been considerable interest in studying generic existence of fixed points of nonexpansive mappings defined on such sets. In~\cite{DM1976Convergence} F.~S.~de~Blasi and J.~Myjak show that a generic nonexpansive self-mapping $f$ of a bounded, closed and convex subset of a Banach space has a fixed point and that this fixed point can be found by first choosing an arbitrary starting point $x_0$ and then using the iterative steps $x_{k+1} = f(x_k)$, where $k = 0, 1, 2, \ldots$. 
Since in the case of Hilbert spaces, it is also shown in~\cite{DM1976Convergence} that a typical nonexpansive mapping has the maximal possible Lipschitz constant, namely $1$, the question of which fixed point result is behind this generic convergence result also arose. 
The result regarding the smallness of the set of strict contractions (that is, of those mappings the Lipschitz constant of which is strictly less than $1$) was later generalized to Banach spaces in~\cite{MR3540613} and to more general settings, including some classes of set-valued mappings, in~\cite{MR3706153}. An answer to the question of what is behind the generic convergence was given by A.~Zaslavski and the second author in~\cite{MR1860926}, where they show that a typical nonexpansive mapping $f$ on a bounded, closed and convex subset $D$ of a Banach space is contractive in the sense of Rakotch \cite{MR0148046}, that is, there is a decreasing function $\phi\colon (0,\infty)\to[0,1)$ such that
\[
  \|f(x)-f(y)\| \leq \phi(\|x-y\|)\|x-y\|
\]
for all $x,y\in D$.
This result has been generalized in~\cite{DM1976Convergence} and~\cite{MR3375974} to set-valued mappings the point images of which are closed subsets of their domains of definition. 
Recall that given a bounded, closed and convex subset $D$ of a Banach space $X$ and a mapping
\[
  F\colon D\to 2^{D}\setminus\{\emptyset\},
\]
an element $x\in D$ is called a fixed point of $F$ if it belongs to its image under $F$, that is, if it satisfies $x\in F(x)$. The proofs of these results use iterations of a mapping defined on suitable hyperspaces. More precisely, let $\mathcal{B}(D)$ be a hyperspace of certain nonempty and closed subsets of $D$, and let $F\colon D\to\mathcal{B}(D)$ be a nonexpansive mapping. Then the mapping
\[
  \tilde{F}\colon \mathcal{B}(D)\to\mathcal{B}(D), \qquad A \mapsto \bigcup_{x\in A} F(x),
\]
is considered. Thus, in some sense the problem is ``lifted'' from a set-valued mapping to a single-valued self-mapping of a certain hyperspace. A different approach to this problem is taken in ~\cite{deBlasi} 
and~\cite{Pia2015GenericProperties}. In the particular case of set-valued mappings which are defined by pairs of nonexpansive single-valued mappings, an iteration of the form
\begin{equation}\label{eq:Sequence}
  x_{0}\in D, \qquad x_{k+1}\in \argmin\{\|y-x_k\|\colon y\in F(x_k)\}, 
\quad k\in\mathbb{N},
\end{equation}
is considered. 
In the case where $D$ is a bounded, closed and convex subset of a Hilbert space, the authors consider these mappings as elements of the space
\[
  \mathcal{M} :=  \{\{f,g\}\colon f,g\colon D\to D\;\text{with}\;
\lip f \leq 1\},
\]
which is equipped with the Hausdorff distance inherited from the space of nonexpansive self-mappings endowed with the metric of uniform convergence. Here and in the sequel we denote by $\lip f$ the Lipschitz 
constant of a mapping $f$. It is shown there that given a fixed initial point $x_0$, the set of those nonexpansive mappings for which the sequence in~\eqref{eq:Sequence} is unique and converges is a residual subset of $\mathcal{M}$. In~\cite{deBlasi}, F.~S.~de~Blasi raises the question of whether this result is still true in general Banach spaces. The main difference between the Banach space case and the Hilbert space case is that in the former the Kirszbraun-Valentine extension theorem, on which the proofs in~\cite{deBlasi,Pia2015GenericProperties} are based, is no longer available. One of the goals of the present paper is to give a positive answer to this question. We remark in passing that a detailed overview of the genericity approach and its applications to nonlinear analysis can be found in the book~\cite{MR3137655}.

Since the set-valued mappings considered in this paper are of the form $\{f,g\}$, where both $f,g\colon D\to D$ are nonexpansive (single-valued) mappings, there seem to be at least two natural metrics on the space $\mathcal{M}$: the Hausdorff distance and the metric of uniform convergence. The former was used in~\cite{deBlasi,Pia2015GenericProperties}, whereas the latter is the one used in the results on generic existence of fixed points for set-valued mappings mentioned above. We compare these two metrics and show that for both of them, the results of~\cite{deBlasi,Pia2015GenericProperties} can be 
generalized to the Banach space setting. We emphasize that our goal in the present paper is not to find a fixed point of the mapping $f$ or $g$, which could be done by considering both of them independently, but to work towards an understanding of the generic behavior of the method of successive approximations for set-valued mappings.

Mappings of the form
\[
  X \to 2^{X}\setminus\emptyset, \qquad x \mapsto \{f_i(x)\colon i\in 
I(x)\},
\]
where each $f_i$ is a nonexpansive mapping and $I(x)\subset I$, where $I$ is a finite index set, have also been recently considered by M.~Tam and others in the context of local convergence analysis of 
iterative methods; see, for example~\cite{MR3819673, 2018arXiv180705810D}.

In the sequel $X$ is always a Banach space and $D\subset X$ is a bounded, closed and convex set. We moreover assume that $D$ contains more than one element. By $h$ we denote the Hausdorff distance on $2^{D}\setminus \{\emptyset\}$.

\section{Some auxiliary results for single-valued mappings}
In order to analyze the set-valued case, we first need a number of auxiliary results regarding the single-valued case. Some of these results might be of independent interest.

\begin{lemma}\label{lem:DiffFixedPoint}
  Let $f\colon D\to D$ be a non-constant, nonexpansive mapping with a fixed point $\xi\in D$. Then for each $\varepsilon>0$, there is a strict contraction $\varphi\colon D \to D$ which is $\varepsilon$-close to~$f$, but has a different fixed point. Moreover, this strict contraction satisfies $\lip \varphi<\lip f$.
\end{lemma}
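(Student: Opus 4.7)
The plan is to perturb $f$ toward a carefully chosen constant map. Since $D$ contains more than one element, I first choose some point $z_0 \in D$ with $z_0 \neq \xi$. For a parameter $\lambda \in (0,1)$ to be specified, I then define
\[
  \varphi_\lambda(x) := (1-\lambda) f(x) + \lambda z_0.
\]
Convexity of $D$ ensures $\varphi_\lambda\colon D \to D$, and a direct estimate gives $\lip \varphi_\lambda \le (1-\lambda)\lip f$. Because $f$ is non-constant, $\lip f > 0$, so this immediately yields $\lip \varphi_\lambda < \lip f \le 1$. In particular $\varphi_\lambda$ is a strict contraction with Lipschitz constant strictly smaller than that of $f$, as required.

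Next I would check that the fixed point is actually displaced. If $\xi$ were a fixed point of $\varphi_\lambda$, then $\xi = (1-\lambda)f(\xi) + \lambda z_0 = (1-\lambda)\xi + \lambda z_0$, which simplifies to $z_0 = \xi$, contradicting the choice of $z_0$. Since $D$ is closed (hence complete) and $\varphi_\lambda$ is a strict contraction, Banach's fixed point theorem provides a unique fixed point of $\varphi_\lambda$, which must therefore differ from $\xi$.

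Finally, for the $\varepsilon$-closeness, I would estimate
\[
  \sup_{x\in D}\|\varphi_\lambda(x) - f(x)\| = \lambda \sup_{x\in D}\|z_0 - f(x)\| \le \lambda\,\diam(D),
\]
so any choice $\lambda \in (0,\min\{1,\varepsilon/\diam(D)\}]$ works (one can assume $\diam(D)>0$, since $D$ has more than one element). Setting $\varphi := \varphi_\lambda$ for such $\lambda$ finishes the construction.

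The only step requiring any real thought is the choice of $z_0$: picking $z_0 \neq \xi$ is precisely what guarantees that the convex-combination trick shifts the fixed point, rather than preserving $\xi$. Once the candidate $\varphi_\lambda$ is written down, the remaining properties (mapping $D$ to $D$, smaller Lipschitz constant, strict contraction, uniform closeness) are immediate. Thus the lemma is more of an explicit construction than a deep argument.
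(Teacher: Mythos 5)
Your construction is exactly the paper's: the paper takes $\varphi(x)=\delta y+(1-\delta)f(x)$ with $y\neq\xi$ and $\delta<\varepsilon/\diam D$, which is your $\varphi_\lambda$ up to renaming, and verifies the same three points (Lipschitz constant $(1-\delta)\lip f$, displaced fixed point via Banach's theorem, uniform $\varepsilon$-closeness). The proposal is correct and essentially identical in approach.
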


\begin{proof}
  Given $\varepsilon>0$, choose $y\in D$ with $0<\|\xi-y\|<\varepsilon$ and $\delta\in (0,1)$ with  $\delta < \frac{\varepsilon}{\diam D}$. We define $\varphi\colon D\to D$ by
  \[
    \varphi(x) := \delta y + (1-\delta) f(x), \qquad x\in D,
  \]
  and observe that
  \[
    \lip\varphi = (1-\delta) \lip f\qquad\text{and}\qquad \|f(x)-\varphi(x)\| = \delta \|y-f(x)\|< \varepsilon.
  \]
  Moreover, since 
  \[
    \|\varphi(\xi)-\xi\| = \delta\|\xi-y\|>0,
  \]
  it is clear that $\xi$ is not a fixed point of $\varphi$. Since according to Banach's fixed point theorem, $\varphi$ has a unique fixed point, it follows that $\varphi$ has a fixed point $\eta\neq \xi$, as 
asserted.
\end{proof}

We denote by
\[
  \mathcal{M}^{1} := \{f\colon D \to D\colon \lip f\leq 1\}
\]
the space of nonexpansive self-mappings of $D$ equipped with the metric of uniform convergence, 
\[
  d_\infty(f,g) = \sup_{x\in D}\|f(x)-g(x)\|,
\]
which turns $\mathcal{M}^{1}$ into a complete metric space.

\begin{lemma}\label{lem:PropD}
  Let $f\colon D\to D$ be nonexpansive. Then for each $\varepsilon>0$, there are a number $\theta_0>1$ and a nonexpansive mapping $\varphi\colon D\to D$ such that $d_\infty(f, \varphi) <\varepsilon$ and
  \begin{equation}
    \theta \varphi(x) + (1-\theta) x\in D
  \end{equation}
  for every $\theta<\theta_0$ and all $x\in D$. If $f$ is a strict contraction, then so is~$\varphi$.
\end{lemma}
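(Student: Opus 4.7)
The requirement $\theta\varphi(x)+(1-\theta)x\in D$ for some $\theta>1$ is, geometrically, the requirement that one can continue the line segment from $x$ through $\varphi(x)$ a definite distance past $\varphi(x)$ without leaving~$D$. The simplest way to manufacture such room is to make $\varphi(x)$ an interior point of the segment $[x,f(x)]$, since that segment already lies in~$D$ by convexity; continuing past $\varphi(x)$ then just moves us toward $f(x)\in D$.

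Guided by this, I would set
\[
  \varphi(x) := (1-\alpha)\,x + \alpha\, f(x)
\]
for an $\alpha\in(0,1)$ to be chosen close to~$1$, and take $\theta_0:=1/\alpha>1$. Then $\varphi(D)\subset D$ by convexity, and the triangle inequality gives
\[
  \|\varphi(x)-\varphi(y)\| \le (1-\alpha)\|x-y\|+\alpha\|f(x)-f(y)\|\le \bigl((1-\alpha)+\alpha\lip f\bigr)\|x-y\|,
\]
so $\lip\varphi\le 1$ if $f$ is nonexpansive, and $\lip\varphi\le 1-\alpha(1-\lip f)<1$ if $f$ is already a strict contraction. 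The uniform estimate
\[
  \|\varphi(x)-f(x)\|=(1-\alpha)\|x-f(x)\|\le(1-\alpha)\diam D
\]
then shows that choosing $\alpha$ with $(1-\alpha)\diam D<\varepsilon$ forces $d_\infty(f,\varphi)<\varepsilon$.

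It remains to check the extension property. For any $\theta<\theta_0=1/\alpha$ we compute
\[
  \theta\varphi(x)+(1-\theta)x = (1-\theta\alpha)\,x + \theta\alpha\, f(x),
\]
which is a convex combination of $x$ and $f(x)$ because $\theta\alpha\in[0,1)$, hence lies in~$D$. This gives the claimed inclusion for every $\theta<\theta_0$ and every $x\in D$.

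There is no real obstacle: the entire argument hinges on spotting that the ``damped'' averaging $\varphi=(1-\alpha)\mathrm{id}+\alpha f$ simultaneously approximates $f$ uniformly, preserves nonexpansiveness (and strict contractivity), and places $\varphi(x)$ a definite convex-combination fraction $\alpha$ of the way from $x$ to $f(x)$, thereby building in exactly the needed slack $\theta_0=1/\alpha>1$.
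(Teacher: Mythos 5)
Your proposal is correct and is essentially identical to the paper's proof: both take $\varphi=(1-t)\,\mathrm{id}+t f$ with $t$ close to $1$, set $\theta_0=1/t$, and verify the Lipschitz, approximation and extension properties by the same computations.
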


\begin{proof}
  For each $t\in (0,1)$, define a mapping $\varphi_t\colon D\to D$ by
  \[
    \varphi_t(x) := (1-t)x+t f(x)\quad\text{for}\quad x\in D.
  \]
  Note that the convexity of $D$ guarantees that $\varphi_t$ is well defined. Moreover,
  \[
    \|\varphi_t(x)-\varphi_t(y)\|\leq (1-t) \|x-y\| + t\lip f \|x-y\| = \mu \|x-y\|
  \]
  for all $x,y\in D$, where $\mu := (1-t)+t\lip f$. Observe that $\mu\leq 1$ always holds and that $\mu<1$ whenever $\lip f <1$. For a fixed $t\in (0,1)$, we have
  \[
    x+ \frac{1}{t}(\varphi_t(x)-x) = f(x)\in D
  \]
  for all $x\in D$. Therefore we can set $\varphi := \varphi_t$ and $\theta_0 := \frac{1}{t}$.
  Finally, we choose $t$ close enough to $1$ so that $(1-t)\diam D <\varepsilon$ and observe that
  \[
    \|\varphi(x)-f(x)\| = (1-t) \|x-f(x)\| \le (1-t)\diam D < \varepsilon
  \]
  for all $x \in D$. This completes the proof of the lemma.
\end{proof}

\begin{lemma}\label{lem:perturbation}
  Let $f\colon D\to D$ be a strict contraction and let $\theta_0>1$ be so that $\theta f(x) + (1-\theta) x\in D$ for every $x\in D$ and every $\theta<\theta_0$. Moreover, let $\eta\in D$ and $\sigma\in(0,1]$. Then for every $\varepsilon>0$, there is a strict contraction $\tilde{f}\colon D\to D$ such that $d_\infty(f, \tilde{f}) < \varepsilon$,
  \begin{equation}
    \tilde{f}(x) = f(x) \;\text{for} \;x\in D\setminus B(\eta,\sigma)\quad\text{and}\quad
    \tilde{f}(\eta) = f(\eta) + c \big(f(\eta)-\eta\big)
  \end{equation}
  for some $c>0$.
\end{lemma}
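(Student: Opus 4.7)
The plan is to construct $\tilde{f}$ as a localized perturbation of $f$ by means of a scalar cutoff. Specifically, I would set
\[
  \tilde{f}(x) := f(x) + c\,\psi(x)\bigl(f(x)-x\bigr), \qquad x\in D,
\]
where $c>0$ is a small constant to be fixed later, and
\[
  \psi(x) := \max\!\left\{0,\,1-\frac{\|x-\eta\|}{\sigma}\right\}
\]
is the tent-shaped cutoff. Then $\psi(\eta)=1$ and $\psi$ vanishes outside $B(\eta,\sigma)$, which immediately gives the equality $\tilde{f}(x)=f(x)$ for $x\in D\setminus B(\eta,\sigma)$ and the prescribed value $\tilde{f}(\eta)=f(\eta)+c(f(\eta)-\eta)$.

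Three properties must then be arranged by choosing $c$ sufficiently small. First, $\tilde{f}$ must take values in $D$: rewriting $\tilde{f}(x)=\theta(x)f(x)+\bigl(1-\theta(x)\bigr)x$ with $\theta(x):=1+c\psi(x)\in[1,1+c]$, the hypothesis on $\theta_0$ forces $\tilde{f}(x)\in D$ as soon as $1+c<\theta_0$. Second, the uniform distance estimate
\[
  \|\tilde{f}(x)-f(x)\| = c\,\psi(x)\,\|f(x)-x\| \leq c\,\diam D
\]
is below $\varepsilon$ provided $c<\varepsilon/\diam D$. Third, to ensure $\lip\tilde{f}<1$, I would split
\[
  \tilde{f}(x)-\tilde{f}(y) = \bigl(f(x)-f(y)\bigr) + c\psi(x)\bigl[(f(x)-f(y))-(x-y)\bigr] + c\bigl(\psi(x)-\psi(y)\bigr)\bigl(f(y)-y\bigr),
\]
use $\psi\leq 1$, $\lip\psi\leq 1/\sigma$, and $\|f(y)-y\|\leq\diam D$ to obtain the bound
\[
  \lip\tilde{f} \;\leq\; \lip f + c(1+\lip f) + \frac{c\,\diam D}{\sigma}.
\]
Since $\lip f<1$ by hypothesis, the right-hand side can be made strictly less than $1$ by taking $c$ small enough.

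Combining the three smallness conditions on $c$ yields a positive value for which $\tilde f$ simultaneously maps $D$ into $D$, is a strict contraction, is $\varepsilon$-close to $f$ in $d_\infty$, agrees with $f$ outside $B(\eta,\sigma)$, and satisfies the asserted identity at $\eta$ with constant~$c$. The main obstacle I expect is the Lipschitz estimate: the cutoff $\psi$ is only Lipschitz with constant $1/\sigma$, so for small $\sigma$ the perturbation cost grows; this is precisely why one must pay for a small radius $\sigma$ with an even smaller $c$. The condition on $\theta_0$, which allows a genuine overshoot beyond $f(x)$ while remaining in $D$, is what makes the construction with $c>0$ (rather than the trivial $c=0$) possible.
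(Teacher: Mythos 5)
Your construction is essentially the same as the paper's: both perturb $f$ by a tent-shaped cutoff supported in $B(\eta,\sigma)$ acting in the direction $f(x)-x$, use the $\theta_0$-hypothesis to keep the range in $D$, and shrink the amplitude to preserve strict contractivity and $\varepsilon$-closeness (the paper writes the amplitude and cutoff together as one function $\gamma$, and bounds $\|f(x)-x\|$ locally by $2\sigma+\|f(\eta)-\eta\|$ instead of by $\diam D$, but these are cosmetic differences). Your argument is correct as stated.
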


\begin{proof}
  Given $\varepsilon >0$ and $\sigma>0$, we set
  \begin{equation}
    \alpha := \min\left\{\frac{1-\lip(f)}{4}, \frac{\varepsilon}{2\sigma}, \frac{\theta_0-1}{2\sigma}\right\}
  \end{equation}
  and
  \begin{equation}
    \gamma(x) := \max\{0,\sigma-\|x-\eta\|\} \min\left\{\alpha,\frac{\alpha}{\|f(\eta)-\eta\|+2\sigma}\right\}.
  \end{equation}
  The function $\gamma$ satisfies
  \[
    \|\gamma\|_\infty = \sigma\alpha \min\left\{1,\frac{1}{\|f(\eta)-\eta\|+2\sigma}\right\},\qquad
    \lip(\gamma) = \min\left\{\alpha,\frac{\alpha}{\|f(\eta)-\eta\|+2\sigma}\right\}.
  \]
  We define $\tilde{f}$ by
  \begin{equation}
    \tilde{f}(x) := f(x)+\gamma(x)(f(x)-x), \qquad x\in D.
  \end{equation}
  Note that
  \begin{align*}
    \tilde{f}(x) &= f(x)+\gamma(x)(f(x)-x)= (1+\gamma(x))f(x)-\gamma(x)x\\
                 &= \mu(x) f(x) + (1-\mu(x))x,
  \end{align*}
  where $\mu(x)=1+\gamma(x)>1$ and $\mu(x)<\theta_0$.

  The calculation
  \begin{align*}
    \big\|\tilde{f}(x)-\tilde{f}(y)\big\| &=\|f(x)+\gamma(x)(f(x)-x)-f(y)-\gamma(y)(f(y)-y))\|\\
                                  & \leq \|f(x)-f(y)\|+|\gamma(x)-\gamma(y)|\|f(x)-x\|+ 2|\gamma(y)|\|x-y\|\\
                                  & \leq\|x-y\|(\lip(f)+\lip(\gamma)(2\sigma+\|f(\eta)-\eta\|) +2\|\gamma\|_\infty)\\
                                  & \leq \|x-y\|(\lip(f)+\alpha + 2\sigma\alpha)\\ 
                                  & \leq \|x-y\| (\lip(f)+ 3 \alpha)
  \end{align*}
  shows that $\lip(\tilde{f}) \leq (\lip (f) + 3 \alpha)\leq 1 - \alpha <1$.

  Using the fact that the support of $\gamma$ is contained in $\overline{B}(\eta,\sigma)$, we get
  \begin{align*}
    \|\tilde{f}(x)-f(x)\| & = \gamma(x) \|f(x)-x\| \leq \gamma(x) (2\|x-\eta\|+\|f(\eta)-\eta\|)\\
                          & \leq \gamma(x) (2\sigma + \|f(\eta)-\eta\|) \leq \sigma\alpha <\varepsilon.
  \end{align*}
  Finally, note that by construction,
  \begin{equation}
    \tilde{f}(x) = \begin{cases} f(x) & x\not\in B(\eta,\sigma)\\ f(\eta) + \|\gamma\|_\infty(f(\eta)-\eta) & x=\eta.\end{cases}
  \end{equation}
\end{proof}

\begin{lemma}\label{lem:DisFixedPoints}
  Let $\varepsilon>0$, and let $f\colon D\to D$ and $g\colon D\to D$ be 
strict contractions with (unique) fixed points $\xi$ and $\eta$, respectively.
If $d_\infty(f, g) < \varepsilon$, then
  \[
    \|\xi-\eta\|<\min\left\{\frac{\varepsilon}{1-\lip f},\frac{\varepsilon}{1-\lip g}\right\}.
  \]
\end{lemma}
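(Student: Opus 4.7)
The plan is the standard fixed-point stability estimate. Since $f$ and $g$ are strict contractions, Banach's fixed point theorem guarantees the (unique) fixed points $\xi = f(\xi)$ and $\eta = g(\eta)$ referenced in the statement, so both $1 - \lip f$ and $1 - \lip g$ are strictly positive and the right-hand side makes sense.

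The key step is to insert $f(\eta)$ as an intermediate point in the triangle inequality applied to $\|\xi - \eta\| = \|f(\xi) - g(\eta)\|$. Using the contraction property on $\|f(\xi) - f(\eta)\| \leq \lip(f)\,\|\xi - \eta\|$ and the uniform bound $\|f(\eta) - g(\eta)\| \leq d_\infty(f,g) < \varepsilon$, I obtain
\[
\|\xi - \eta\| \leq \lip(f)\,\|\xi - \eta\| + \|f(\eta) - g(\eta)\| < \lip(f)\,\|\xi - \eta\| + \varepsilon.
\]
Rearranging and dividing by the positive quantity $1 - \lip f$ yields $\|\xi - \eta\| < \varepsilon/(1 - \lip f)$, with the strict inequality preserved because $d_\infty(f,g) < \varepsilon$ is itself strict.

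By the symmetric argument, inserting $g(\xi)$ instead of $f(\eta)$ and using that $g$ is a strict contraction with Lipschitz constant $\lip g$, I get $\|\xi - \eta\| < \varepsilon/(1 - \lip g)$. Taking the minimum of the two bounds proves the lemma.

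There is no real obstacle here; the only thing to check carefully is that the strictness of $d_\infty(f,g) < \varepsilon$ is used to get strict inequality in the conclusion, rather than just a $\leq$ estimate. Everything else is routine triangle inequality and the contractivity assumption.
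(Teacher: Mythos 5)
Your proof is correct and follows essentially the same route as the paper: insert $f(\eta)$ (respectively $g(\xi)$) via the triangle inequality, use the contraction property and the bound $\|f(\eta)-g(\eta)\|\leq d_\infty(f,g)<\varepsilon$, rearrange, and combine the two resulting inequalities. No issues.
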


\begin{proof}
  Using the triangle inequality, we obtain
  \begin{align*}
    \|\xi-\eta\| & = \|f(\xi)-g(\eta)\|\leq \|f(\xi)-f(\eta)+f(\eta)-g(\eta)\|\\
                 & \leq \|f(\xi)-f(\eta)\|+\|f(\eta)-g(\eta)\| 
< (\lip f) \|\xi-\eta\| + \varepsilon,
  \end{align*}
  which is equivalent to
  \[
    \|\xi-\eta\| < \frac{\varepsilon}{1-\lip f}.
  \]
  This inequality, when combined with the analogous inequality for $g$, 
yields the claimed bound on $\|\xi-\eta\|$.
\end{proof}

\section{Pairs of nonexpansive mappings}

We begin this section with the following definition. 

\begin{definition}
  Given two nonexpansive mappings $f\colon D\to D$ and $g\colon D\to D$, we denote by $\{f,g\}$ the set-valued mapping defined by $x\mapsto\{f(x), g(x)\}$.
  We endow the space 
  \begin{equation}
    \mathcal{M} := \{\{f,g\}\colon f,g\colon D\to D\quad\text{nonexpansive}\}
  \end{equation}
  with the Hausdorff distance $H$ on the subsets of $\mathcal{M}^{1}$. 
By $\mathcal{N}$ we denote the subset of all pairs $\{f,g\}$ of 
strict contractions, that is, $\lip f,\lip g<1$. By $\mathcal{M}_{\infty}$ 
we denote the space $\mathcal{M}$ equipped with the metric
  \begin{equation}
    h_{\infty}(F,G) := \sup_{x\in D} h(F(x),G(x))
  \end{equation}
  of uniform convergence on $D$.
\end{definition}

\begin{remark}
  The metric space $(\mathcal{M}, H)$ is a complete metric space because 
it is a 
closed subset of the space of compact subsets of the complete metric space 
$\mathcal{M}^{1}$. Observe that given two elements  
$F,G\in\mathcal{M}$, $F=\{f_1,f_2\}$ and $G=\{g_1,g_2\}$, 
the Hausdorff distance $H$ satisfies
  \begin{equation}\label{eq:HAsMinMax}
    H(F,G)  = \min\{\max\{d_{\infty}(f_1,g_1),d_{\infty}(f_2,g_2)\}, 
\max\{d_{\infty}(f_1,g_2),d_{\infty}(f_2,g_1)\}\}
  \end{equation}
  ({\textit{cf.}}~Proposition~2.2 in~\cite[p.~1099]{Pia2015GenericProperties}). 
Moreover, it is easy to see that for each $x\in D$, the inequality 
$h(F(x),G(x))\leq H(F,G)$ is satisfied and hence
  \begin{equation}\label{eq:RelationBetweenMetrics}
    h_{\infty}(F,G) \leq H(F,G)
  \end{equation}
  for all $F,G\in \mathcal{M}$.
\end{remark}

\begin{proposition}
  The space $\mathcal{M}_{\infty} = (\mathcal{M}, h_{\infty})$ is complete.
\end{proposition}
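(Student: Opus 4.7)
Let $(F_n)$ be a Cauchy sequence in $\mathcal{M}_\infty$, with $F_n=\{f_n,g_n\}$. My plan is to produce a pointwise Hausdorff limit $F$ and realize it as a pair $\{f,g\}\in\mathcal{M}$.

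For each $x\in D$, the $h_\infty$-Cauchy property says that $(F_n(x))$ is Cauchy in the Hausdorff metric $h$. The hyperspace of nonempty subsets of $D$ of cardinality at most two is a complete metric space (being a closed subspace of the complete hyperspace of bounded closed subsets of $D$), so $F_n(x)\to F(x)$ for some $F(x)\subset D$ with $|F(x)|\in\{1,2\}$. Once I have established that $F\in\mathcal{M}$, the convergence $h_\infty(F_n,F)\to 0$ is automatic: for every $x$ one has $h(F_n(x),F(x))\le\limsup_{m}h_\infty(F_n,F_m)$, and this upper bound is independent of $x$ and tends to zero by the Cauchy property.

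The core of the proof is producing nonexpansive selections. If $|F(x)|=1$ for every $x\in D$, then the pointwise nonexpansive estimates $\|f_n(x)-f_n(y)\|\le\|x-y\|$ pass to the limit and show that $F$ is itself a single-valued nonexpansive mapping; in that case I take $f=g=F$. Otherwise, fix a base point $x_0\in D$ with $F(x_0)=\{a_0,b_0\}$, $a_0\neq b_0$; after passing to a subsequence, we may assume $f_n(x_0)\to a_0$ and $g_n(x_0)\to b_0$. For each $y\in D$ one extracts, via further subsequential limits of the label-induced bijections $F_n(x_0)\to F_n(y)$, a surjection $\sigma_y\colon\{a_0,b_0\}\to F(y)$ satisfying $\|a_0-\sigma_y(a_0)\|\le\|x_0-y\|$ and $\|b_0-\sigma_y(b_0)\|\le\|x_0-y\|$; I then set $f(y):=\sigma_y(a_0)$ and $g(y):=\sigma_y(b_0)$.

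The principal obstacle is verifying that $f$ and $g$ are nonexpansive on all of $D$. The analogous pairing $\sigma_{yz}\colon F(y)\to F(z)$ extracted from the labels could map $f(y)$ to $g(z)$ rather than to $f(z)$, which does not immediately yield $\|f(y)-f(z)\|\le\|y-z\|$. I expect to resolve this by exploiting the convexity of $D$: the segment $[x_0,y]\subset D$ permits tracking the two branches of $F$ continuously, and the simply connected nature of the convex set $D$ rules out twisted ``double cover'' pathologies, so that the selections $\sigma_y$ can be chosen globally consistently and produce nonexpansive $f$ and $g$. Special attention will be needed at points $y$ where $|F(y)|=1$, since there the branches of $F$ merge and the ambiguity in $\sigma_y$ must not propagate a non-Lipschitz jump.
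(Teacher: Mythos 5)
Your first half is fine and matches the paper: pointwise Hausdorff limits exist, the limit $F$ is Hausdorff-nonexpansive with $|F(x)|\le 2$, and uniform convergence $h_\infty(F_n,F)\to 0$ follows from the Cauchy property. But the entire difficulty of the proposition is the step you leave as an expectation, namely producing two \emph{nonexpansive} single-valued maps $f,g$ with $F=\{f,g\}$. Your construction, as it stands, only pairs each point $y$ with the base point $x_0$ along a $y$-dependent subsequence; it yields estimates of the form $\|f(y)-f(x_0)\|\le\|y-x_0\|$, never $\|f(y)-f(z)\|\le\|y-z\|$ for arbitrary $y,z$, and since the subsequence varies with $y$ it is not even clear that a single pair of maps realizes all these pointwise limits simultaneously ($D$ is uncountable and not compact, so no diagonal argument is available). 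You correctly identify this as ``the principal obstacle,'' but then resolve it only by a heuristic, and that heuristic is not sound as stated: the two-branch structure of $F$ lives over the set where $\operatorname{diam}F(x)>0$, i.e.\ over $D$ minus the merge set $A_0=\{x\colon |F(x)|=1\}$, and this set need \emph{not} be simply connected even though the convex set $D$ is (think of $A_0$ a single interior point of a two-dimensional $D$). So convexity of $D$ by itself does not exclude a twisted double cover away from $A_0$, and it says nothing about how to preserve the Lipschitz bound when a selection crosses $A_0$, where the branches merge.

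What actually rules out these pathologies — and what the paper's proof is built on — is the quantitative use of the approximants $F_{n}=\{f_n,g_n\}$, which are already decomposed into globally defined nonexpansive maps. The paper works on the sets $A_{\varepsilon_m}=\{x\in D\colon \operatorname{diam}F(x)>\varepsilon_m\}$ with $\varepsilon_{m+1}<\varepsilon_m/2$, chooses $F_{n_m}=\{h_1^m,h_2^m\}$ with $h_\infty(F_{n_m},F)<\varepsilon_m/3$, defines the selections on $A_{\varepsilon_m}$ by proximity (within $\varepsilon_m/3$) to $h_1^m$ and $h_2^m$, checks via the inequalities \eqref{eq:ComplInConst1}--\eqref{eq:ComplInConst2} that consecutive levels are consistent, and then obtains nonexpansiveness of the limit selections $f,g$ from that of the $h_i^m$ together with the uniform $\varepsilon_m/3$-closeness — an argument that also covers points of $A_0$. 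Some mechanism of this kind (labels anchored to a fixed sequence of globally nonexpansive approximants, with a uniform separation scale controlling the labeling) is what your proof is missing; without it, the claim that ``the selections $\sigma_y$ can be chosen globally consistently and produce nonexpansive $f$ and $g$'' is precisely the statement to be proved.
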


\begin{proof}
  Let $\{F_n\}_{n\in\mathbb{N}}$ be a Cauchy sequence in 
$\mathcal{M}_{\infty}$. Since $\mathcal{M}_{\infty}$ is a topological 
subspace of the space of compact-valued nonexpansive mappings with the 
metric of uniform convergence, which is complete 
(see, for example, \cite{BMRZ2009GenericExistence}), this Cauchy sequence 
has a limit $F\colon D\to\mathcal{K}(D)$. As uniform convergence implies 
pointwise convergence and the space of sets with at most two elements is 
a closed subspace of the space of compact subsets of $D$, 
the point images of $F$ have at most two elements. It remains to be shown 
that there are two single-valued nonexpansive mappings $f$ and $g$ on $D$ 
such that $F=\{f,g\}$. In order to establish this assertion, We use the 
following iterative argument.
  We start by setting
  \[
    A_{0} := \{x\in D\colon |F(x)| = 1\} = \{x\in D\colon 
F(x) = \{F_1(x)\}\},
  \]
  
  \[
    f_{0}\colon A_0 \to D, \quad x\mapsto F_1(x)\qquad \text{and}\qquad g_0 := f_0.
  \]
  Given $\varepsilon>0$, we set
  \[
    A_{\varepsilon} := \{x\in D\colon \diam F(x) > \varepsilon\}.
  \]
  In other words, by $A_\varepsilon$ we denote the set of points for which 
the two elements of the point image of $F$ are at least $\varepsilon$ 
apart. Now pick $\varepsilon_1>0$ small enough and choose an $n_1\in\mathbb{N}$ such that $h_{\infty}(F_{n_1},F) < \frac{\varepsilon_1}{3}$. Let $h_1^{1},h_2^{1}\colon D\to D$ be nonexpansive mappings such that 
$F_{n_1}=\{h_1^{1},h_2^{1}\}$. In the sequel, we use the notation 
$F(x) = \{F_1(x), F_2(x)\}$. We now define
  \[
    f_1\colon A_{\varepsilon_1}\to D, \qquad x\mapsto
    \begin{cases}
      F_1(x)&: \|F_1(x)-h_1^{1}(x)\|<\frac{\varepsilon_1}{3}\\
      F_2(x)&: \|F_2(x)-h_1^{1}(x)\|<\frac{\varepsilon_1}{3}\\      
    \end{cases}
  \]
  and
  \[
    g_1\colon A_{\varepsilon_1}\to D, \qquad x\mapsto
    \begin{cases}
      F_1(x)&: \|F_1(x)-h_2^{1}(x)\|<\frac{\varepsilon_1}{3}\\
      F_2(x)&: \|F_2(x)-h_2^{1}(x)\|<\frac{\varepsilon_1}{3}\\      
    \end{cases}.
  \]
  Note that the condition $\|F_1(x)-F_2(x)\|>\varepsilon_1$ ensures 
that both $f_1$ and $g_1$ are well defined and that the 
inequality $h_{\infty}(F_n,F) < \frac{\varepsilon_1}{3}$ ensures that 
for each $x\in A_{\varepsilon_1}$, at least one element of $F(x)$ is close 
enough to $h_1(x)$ and $h_2(x)$, respectively.

  Now fix $m\in\mathbb{N}$, and assume that $f_m$ and $g_m$ are already 
defined, and satisfy
  \[
    F(x) = \{f_m(x),g_m(x)\}
  \]
  for each $x\in A_{\varepsilon_m}$. Moreover, we assume that we have picked $n_{m}\in\mathbb{N}$ and nonexpansive mappings $h_1^{m},h_2^{m}\colon D\to D$ such that $F_{n_m}=\{h_1^{m},h_2^{m}\}$,
  \[
    \|h_1^{m}(x)-f_m(x)\|\leq \frac{\varepsilon_{m}}{3} \qquad\text{and}\qquad
    \|h_2^{m}(x)-g_{m}(x)\|\leq \frac{\varepsilon_{m}}{3}
  \]
  for each $x\in A_{\varepsilon_{m-1}}$. We choose an $\varepsilon_{m+1}\in (0,\frac{\varepsilon_{m}}{2})$ and pick a natural number $n_{m+1}> n_{m}$ such that~$h_{\infty}(F_{n_{m+1}},F) < \frac{\varepsilon_{m+1}}{3}$. Now we can find nonexpansive mappings \[
    h_1^{m+1}, h_{2}^{m+1}\colon D\to D
  \]
  such that $F_{n_{m+1}}=\{h_1^{m+1},h_2^{m+1}\}$,
  \[
    \|h_{1}^{m}(x)-h_1^{m+1}(x)\| \leq 
\frac{1}{3} (\varepsilon_m+\varepsilon_{m+1})\quad \text{and} \quad 
\|h_{2}^{m}(x)-h_2^{m+1}(x)\| \leq 
\frac{1}{3} (\varepsilon_m+\varepsilon_{m+1}).
  \]
  When combined with the inverse triangle inequality, these inequalities 
also yield
  \begin{equation}\label{eq:ComplInConst1}
    \|h_{1}^{m}(x)-h_2^{m+1}(x)\| \geq \varepsilon_m -\frac{1}{3} (\varepsilon_m+\varepsilon_{m+1}) > \frac{1}{3} (\varepsilon_m+\varepsilon_{m+1})
  \end{equation}
  and
  \begin{equation}\label{eq:ComplInConst2}
    \|h_{2}^{m}(x)-h_1^{m+1}(x)\| \geq \varepsilon_m -\frac{1}{3} (\varepsilon_m+\varepsilon_{m+1})  > \frac{1}{3} (\varepsilon_1+\varepsilon_{m+1})
  \end{equation}
  as $\varepsilon_m > 2\varepsilon_{m+1}$. Next we define
  \[
    f_{m+1}\colon A_{\varepsilon_{m+1}}\to D, \qquad x\mapsto
    \begin{cases}
      F_1(x)&: \|F_1(x)-h_1^{m+1}(x)\|<\frac{\varepsilon_{m+1}}{3}\\
      F_2(x)&: \|F_2(x)-h_1^{m+1}(x)\|<\frac{\varepsilon_{m+1}}{3}\\      
    \end{cases}
  \]
  and
  \[
    g_{m+1}\colon A_{\varepsilon_{m+1}}\to D, \qquad x\mapsto
    \begin{cases}
      F_1(x)&: \|F_1(x)-h_2^{m+1}(x)\|<\frac{\varepsilon_{m+1}}{3}\\
      F_2(x)&: \|F_2(x)-h_2^{m+1}(x)\|<\frac{\varepsilon_{m+1}}{3}\\      
    \end{cases}.
  \]
  Again the conditions on $A_{\varepsilon_{m+1}}$ and $F_{n_{m+1}}$ ensure that these mappings are well defined. Note that 
inequalities~\eqref{eq:ComplInConst1} and~\eqref{eq:ComplInConst2} 
imply that
  \[
    f_{m+1}(x) = f_{m}(x)\qquad\text{and}\qquad g_{m+1}(x) = g_{m}(x)
  \]
  for all $x\in A_{\varepsilon_m}$.

  The result of this inductive construction is a sequence 
of sets $\{A_{\varepsilon_m}\}$ such that 
  \[
    A_{\varepsilon_{m}} \subset A_{\varepsilon_{m+1}} \qquad\text{and}\qquad
    D = \bigcup_{m=0}^{\infty} A_{\varepsilon_m}
  \]
  and a sequence $\{f_{m},g_{m}\}$. We now set $k(x) := \min\{m\in\mathbb{N}\colon x\in A_{\varepsilon_{m}}\}$ and define two mappings $f\colon D\to D$ and $g\colon D\to D$ by
  \[
    f(x) := f_{k(x)}(x)\qquad \text{and}\qquad g(x) := g_{k(x)}(x).
  \]
  We finish the proof by showing that $f$ is nonexpansive. The argument 
for $g$ is completely similar. Let $x,y\in D$ be given. Assume that 
neither one of the points are in the set $A_0$. For every $\varepsilon>0$, 
there is an $\varepsilon_{m}<\varepsilon$ such that $x,y \in A_{\varepsilon_{m}}$ and hence
  \begin{align*}
    \|f(x)-f(y)\| &\leq \|h_{1}^{m+1}(x)-h_{1}^{m+1}(y)\| + \|f(x)-h_{1}^{m+1}(x)\| + \|f(y)-h_{1}^{m+1}(y)\|\\
                  &\leq \|x-y\| + \varepsilon.
  \end{align*}
  If one of the points, say $x$, is contained in $A_0$, then the 
inequality $h(F_{n_m}(x),F(x))<\frac{\varepsilon_m}{3}$ implies that
  \[
    \|h_1^{m}(x)-f(x)\| \leq \frac{\varepsilon_m}{3}\qquad \text{and}\qquad
    \|h_2^{m}(x)-f(x)\| \leq \frac{\varepsilon_m}{3}
  \]
  because $F(x)=\{f(x)\}$. Hence in any case, we end up with
  \[
    \|f(x)-f(y)\|\leq \|x-y\| + \varepsilon
  \]
  for every $\varepsilon>0$. Letting $\varepsilon \to 0^{+}$, we arrive at
the claimed result.
\end{proof}

\begin{lemma}\label{lem:SetValDiff}
  Let $\varepsilon>0$, and let $f_1$, $f_2$, $g_1$ and $g_2$ be nonexpansive 
self-mappings of $D$ with $d_\infty(f_1, f_2), \; d_\infty(g_1, g_2) < \varepsilon$. 
Then the set-valued mappings $F_1$ and $F_2$ defined by
  \begin{equation}
    F_1(x) := \{f_1(x),g_1(x)\}\qquad\text{and}\qquad F_2(x) := \{f_2(x),g_2(x)\}, \qquad x\in D,
  \end{equation}
  satisfy $h(F_1(x),F_2(x))<\varepsilon$ and $H(F_1,F_2)<\varepsilon$.
\end{lemma}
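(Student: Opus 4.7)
My plan is to deduce both inequalities directly from the min-max formula for the Hausdorff distance of two-point sets, which is the content of equation~\eqref{eq:HAsMinMax} in the preceding remark. In fact, the key observation is that~\eqref{eq:HAsMinMax} is not specific to the ambient metric: it holds for the Hausdorff distance induced by \emph{any} metric on two-point subsets, since there are only two possible bijections between a pair of two-element sets.

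For the pointwise estimate, I would fix $x\in D$ and apply the identity to the two-point subsets $F_1(x) = \{f_1(x), g_1(x)\}$ and $F_2(x)=\{f_2(x),g_2(x)\}$ of $D$ with its norm metric, obtaining
\[
  h(F_1(x),F_2(x)) = \min\bigl\{\max\{\|f_1(x)-f_2(x)\|,\|g_1(x)-g_2(x)\|\},\; \max\{\|f_1(x)-g_2(x)\|,\|g_1(x)-f_2(x)\|\}\bigr\}.
\]
(If one of the sets degenerates to a single point this reduces to an even simpler expression, still bounded by the same max.) The first of the two maxima is bounded above by $\max\{d_\infty(f_1,f_2), d_\infty(g_1,g_2)\} < \varepsilon$, which is enough to conclude that $h(F_1(x),F_2(x)) < \varepsilon$.

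For the second inequality, I would apply~\eqref{eq:HAsMinMax} itself, with $f_1,g_1,f_2,g_2$ in the roles of the four mappings, to obtain
\[
  H(F_1,F_2) = \min\bigl\{\max\{d_\infty(f_1,f_2), d_\infty(g_1,g_2)\},\; \max\{d_\infty(f_1,g_2), d_\infty(g_1,f_2)\}\bigr\}.
\]
By hypothesis the first max is strictly less than $\varepsilon$, so $H(F_1,F_2) < \varepsilon$ follows at once.

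There is essentially no obstacle here; the lemma is a direct corollary of the remark. The only thing to be a little careful about is the degenerate case where $f_i(x) = g_i(x)$ for some $x$ (so that $F_i(x)$ is a singleton), but in that case one just matches the lone point in $F_1(x)$ with $f_2(x)$ and $g_2(x)$ separately and uses the same bounds, so it does not affect the conclusion.
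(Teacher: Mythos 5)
Your proposal is correct and follows essentially the same route as the paper, which simply invokes the min-max identity~\eqref{eq:HAsMinMax} together with~\eqref{eq:RelationBetweenMetrics}. The only cosmetic difference is that you derive the pointwise bound $h(F_1(x),F_2(x))<\varepsilon$ directly from the pointwise version of the identity rather than from the inequality $h(F_1(x),F_2(x))\leq H(F_1,F_2)$, which changes nothing of substance.
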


\begin{proof}
  This is an easy consequence of~\eqref{eq:HAsMinMax} 
and of~\eqref{eq:RelationBetweenMetrics}; {\textit{cf.}}~Remark~2.3 
in~\cite{Pia2015GenericProperties}.
\end{proof}

\begin{lemma}\label{lem:SetValDiffInv}
  Let $\varepsilon>0$, let $f_1$, $f_2$, $g_1$ and $g_2$ be nonexpansive 
self-mappings of $D$, and let  $F_1$ and $F_2$ be the set-valued mappings 
defined by
  \begin{equation}
    F_1(x) := \{f_1(x),g_1(x)\}\qquad\text{and}\qquad F_2(x) := \{f_2(x),g_2(x)\}, \qquad x\in D.
  \end{equation}
  Then the inequality $H(F_1,F_2)<\varepsilon$ implies that
  \[
    d_\infty(f_1, f_2), d_\infty(g_1, g_2) < 
\varepsilon\quad \text{or} \quad  
d_\infty(f_1, g_2), d_\infty(g_1, f_2) < \varepsilon.
  \]
\end{lemma}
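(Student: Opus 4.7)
The plan is to read off the conclusion directly from the explicit formula~\eqref{eq:HAsMinMax} for the Hausdorff distance on pairs, which was stated in the remark right after the definition of $\mathcal{M}$. That formula expresses $H$ as a minimum of two maxima, one for each of the two possible matchings between the elements of two unordered pairs. The present lemma is essentially the contrapositive/inversion of that identity: if the minimum is small, then one of the two matchings realizes small distances on both components.

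Concretely, I would first instantiate~\eqref{eq:HAsMinMax} with the pairs $F_1=\{f_1,g_1\}$ and $F_2=\{f_2,g_2\}$. Care has to be taken with the labelling, because in~\eqref{eq:HAsMinMax} the pair $F$ was written as $\{f_1,f_2\}$ and $G$ as $\{g_1,g_2\}$, whereas here the subscript indexes the pair rather than the element. With the obvious relabelling one obtains
\[
H(F_1,F_2)=\min\bigl\{\max\{d_\infty(f_1,f_2),d_\infty(g_1,g_2)\},\ \max\{d_\infty(f_1,g_2),d_\infty(g_1,f_2)\}\bigr\}.
\]

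From here the argument is a triviality: since the minimum of two real numbers is less than $\varepsilon$ if and only if at least one of them is less than $\varepsilon$, the hypothesis $H(F_1,F_2)<\varepsilon$ forces one of the two maxima to be less than $\varepsilon$. In the first case this yields $d_\infty(f_1,f_2)<\varepsilon$ and $d_\infty(g_1,g_2)<\varepsilon$; in the second case it yields $d_\infty(f_1,g_2)<\varepsilon$ and $d_\infty(g_1,f_2)<\varepsilon$. This is exactly the disjunction claimed.

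There is no genuine obstacle here; the lemma is a one-line corollary of~\eqref{eq:HAsMinMax}. The only mild point of attention is the degenerate situation in which some of the mappings coincide (for instance $f_1=g_1$, so that $F_1$ is really a singleton in $\mathcal{M}^1$), but in that case formula~\eqref{eq:HAsMinMax} remains valid with both matchings giving the same value, and the two alternatives in the conclusion coincide as well, so nothing breaks.
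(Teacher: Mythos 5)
Your argument is correct and coincides with the paper's: the paper's proof of this lemma is simply a reference to Proposition~2.2 of Pianigiani, which is exactly the min--max identity~\eqref{eq:HAsMinMax} you invoke, and your observation that a minimum below $\varepsilon$ forces one of the two matchings to have both component distances below $\varepsilon$ is all that is needed. Your handling of the relabelling and of the degenerate case $f_1=g_1$ is also fine, so the proposal is just a slightly more explicit version of the paper's citation-based proof.
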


\begin{proof}
  See Proposition~2.2 in~\cite{Pia2015GenericProperties}.
\end{proof}

\begin{remark}
  Note that the above lemma fails if we replace the Hausdorff distance $H$ by the metric of uniform convergence. For example, consider $D=[-1,1]^{3}$, which is a bounded, closed and convex subset of $\mathbb{R}^{3}$, and the mappings
  \begin{align*}
    f_1\colon D\to D &\qquad (x,y,z)\mapsto (x,0,\varepsilon/2),\\
    g_1\colon D\to D &\qquad (x,y,z)\mapsto (-x,0,-\varepsilon/2),\\
    f_2\colon D\to D &\qquad (x,y,z)\mapsto \begin{cases}(x,0,0)&x\leq 0\\(-x,0,0) &x>0\end{cases}\\
    g_2\colon D\to D &\qquad (x,y,z)\mapsto \begin{cases}(-x,0,0)&x\leq 0\\(x,0,0)&x>0\end{cases}.    
  \end{align*}
  For these mappings, we obtain
  \[
    h(F_1(\xi),F_2(\xi)) = h(\{f_1(\xi),g_1(\xi)\},\{f_2(\xi),g_2(\xi)\}) < \varepsilon,
  \]
  but the inequalities 
  \[
    \|f_1(\xi)-f_2(\xi)\| \geq 2 \quad \text{and} \quad 
\|g_1(\xi)-f_2(\xi)\| \geq 2
  \]
  show, for $\varepsilon$ small enough, that a selection in the 
spirit of the above lemma is not possible.
\end{remark}

Given two nonexpansive mapping $f,g\colon D\to D$, we consider the mapping
\begin{equation}
  F\colon D \to \binom{D}{\leq 2}, \quad x\mapsto \{f(x),g(x)\}.
\end{equation}
Then $F\in\mathcal{M}$ by the definition of $\mathcal{M}$.

\begin{definition} Let $F\in\mathcal{M}$. A sequence $\{x_n\}_{n\in\mathbb{N}}$, where 
  \begin{equation}\label{eq:DefSequence}
    x_{n+1}\in P_{F(x_n)}(x_n)
  \end{equation}
  for $n\in\mathbb{N}$, is called a \emph{sequence of successive approximations with respect to $F$}. The sequence $\{x_n\}_{n\in\mathbb{N}}$ is called \emph{regular} if $P_{F(x_n)}(x_n)$ is a singleton for all $n\in\mathbb{N}$.
\end{definition}

\begin{lemma}
  The set $\mathcal{N}\subset\mathcal{M}$ is dense in $\mathcal{M}$.
\end{lemma}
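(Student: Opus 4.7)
The plan is to reduce the problem to the single-valued case via the identity~\eqref{eq:HAsMinMax} and Lemma~\ref{lem:SetValDiff}. Given $F = \{f,g\} \in \mathcal{M}$ and $\varepsilon > 0$, it suffices to exhibit strict contractions $\tilde f, \tilde g\colon D \to D$ with $d_\infty(f,\tilde f), d_\infty(g,\tilde g) < \varepsilon$, since then Lemma~\ref{lem:SetValDiff} immediately yields $H(\{f,g\},\{\tilde f,\tilde g\}) < \varepsilon$ and $\{\tilde f,\tilde g\} \in \mathcal{N}$.

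To produce such approximations, I would fix an arbitrary anchor point $z \in D$ and apply the standard contraction trick used already in Lemma~\ref{lem:DiffFixedPoint}: for $t \in (0,1)$ define
\[
  \tilde f(x) := (1-t)f(x) + t z, \qquad \tilde g(x) := (1-t) g(x) + t z, \qquad x \in D.
\]
The convexity of $D$ ensures $\tilde f,\tilde g$ are well defined self-mappings, and a direct computation shows $\lip \tilde f \leq (1-t)\lip f \leq 1 - t < 1$, so $\tilde f$ is a strict contraction (and similarly for $\tilde g$). Moreover,
\[
  \|\tilde f(x) - f(x)\| = t\|z - f(x)\| \leq t \diam D,
\]
and the same bound holds for $\tilde g$. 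Choosing $t > 0$ small enough that $t\diam D < \varepsilon$ therefore gives $d_\infty(f,\tilde f), d_\infty(g,\tilde g) < \varepsilon$.

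There is no genuine obstacle here: the only thing one has to be careful about is that the Hausdorff distance $H$ on $\mathcal{M}$ is a priori stricter than the pointwise Hausdorff metric $h_\infty$, but by~\eqref{eq:HAsMinMax} (equivalently, the content of Lemma~\ref{lem:SetValDiff}) a uniform sup-norm approximation of the two coordinate mappings is enough to control $H$. Hence the construction above yields $\{\tilde f, \tilde g\} \in \mathcal{N}$ arbitrarily $H$-close to $\{f,g\}$, which establishes the density of $\mathcal{N}$ in $(\mathcal{M}, H)$ (and a fortiori in $\mathcal{M}_\infty$ by~\eqref{eq:RelationBetweenMetrics}).
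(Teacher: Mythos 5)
Your proof is correct and follows essentially the same route as the paper: the paper simply cites the single-valued density result (the averaging-towards-a-point construction you write out explicitly) together with Lemma~\ref{lem:SetValDiff} to pass from uniform closeness of the coordinate mappings to closeness in the Hausdorff distance $H$.
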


\begin{proof}
  This result follows from the corresponding result for single-valued mappings; see, for example,~\cite{DM1976Convergence} and Lemma~\ref{lem:SetValDiff}.
\end{proof}

\begin{proposition}\label{prop:Convergence}
  Let $F\colon D\to \mathcal{K}(D)$ be a Lipschitz mapping with Lipschitz constant $L<1$. Then every sequence $\{x_n\}_{n\in\mathbb{N}}$ with $x_{n+1}\in P_{F(x_n)}(x_n)$ converges to a fixed point of $F$.
\end{proposition}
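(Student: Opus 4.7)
The plan is to mimic the classical Banach--Nadler contraction argument: show that the step sizes $d_n := \|x_{n+1} - x_n\|$ decay geometrically, deduce that $\{x_n\}$ is Cauchy, and then identify the limit as a fixed point of $F$.

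First I would note that because $x_{n+1}$ is a nearest point to $x_n$ in the compact set $F(x_n)$, we have $d_n = d(x_n, F(x_n))$, where $d(\cdot,\cdot)$ denotes the distance from a point to a set. The crucial observation is then that $x_{n+1} \in F(x_n)$, so by the definition of the Hausdorff distance,
\begin{equation*}
  d_{n+1} = d(x_{n+1}, F(x_{n+1})) \leq \sup_{y \in F(x_n)} d(y, F(x_{n+1})) \leq h(F(x_n), F(x_{n+1})) \leq L\, d_n.
\end{equation*}
Iterating gives $d_n \leq L^n d_0$, and since $L < 1$ a telescoping estimate
\begin{equation*}
  \|x_{n+k} - x_n\| \leq \sum_{j=0}^{k-1} d_{n+j} \leq \frac{L^n d_0}{1-L}
\end{equation*}
shows that $\{x_n\}$ is Cauchy. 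Since $D$ is closed, the limit $x^* := \lim_n x_n$ lies in $D$.

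To identify $x^*$ as a fixed point of $F$, I would insert the intermediate point $x_{n+1} \in F(x_n)$ and estimate
\begin{equation*}
  d(x^*, F(x^*)) \leq \|x^* - x_{n+1}\| + d(x_{n+1}, F(x^*)) \leq \|x^* - x_{n+1}\| + L\|x_n - x^*\|,
\end{equation*}
which tends to $0$ as $n \to \infty$. Since $F(x^*)$ is compact and hence closed, this forces $x^* \in F(x^*)$. I do not anticipate a real obstacle; the one point worth emphasizing is that the estimate $d_{n+1} \leq L\,d_n$ exploits the \emph{one-sided} Hausdorff bound applied to the membership $x_{n+1} \in F(x_n)$, rather than merely bounding $h(F(x_n),F(x_{n+1}))$ in the abstract. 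It is precisely this coupling between the Hausdorff-Lipschitz condition on $F$ and the fact that the iterate $x_{n+1}$ was selected inside $F(x_n)$ that makes the step sizes contract geometrically.
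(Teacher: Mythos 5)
Your argument is correct and is essentially the same as the paper's: both establish the geometric decay of the step sizes by combining the membership $x_{n+1}\in F(x_n)$ with the Hausdorff--Lipschitz bound (the paper writes this at index $n$, you at index $n+1$), conclude via a geometric series that the sequence is Cauchy, and identify the limit as a fixed point by a triangle-type estimate using the closedness of $F(x^*)$. No gap to report.
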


\begin{proof}
  For $n\in\mathbb{N}$, we have
  \begin{equation}\label{eq:FLipIterate}
    \|x_{n+1}-x_{n}\| = d(x_n, F(x_n)) \leq h(F(x_{n-1}), F(x_n)) \leq L \|x_n-x_{n-1}\|
  \end{equation}
  because $x_n\in P_{F(x_{n-1})}(x_{n-1})$ and $F$ is $L$-Lipschitz. Hence 
for $k\in\mathbb{N}$, we obtain
  \begin{equation}\label{eq:BanachItBound}
    \begin{aligned}
      \|x_{n+k}-x_{n}\| & \leq \sum_{j=1}^{k} \|x_{n+j}-x_{n+j-1}\| \leq  \|x_{n}-x_{n-1}\| \sum_{j=1}^{k} L^j \leq \|x_1-x_0\| \sum_{j=1}^{k} L^{j+n-1}\\ 
      & \leq \|x_1-x_0\| \sum_{j=n}^{\infty} L^j  = \|x_1-x_0\| \frac{L^{n}}{1-L},
    \end{aligned}
  \end{equation}
  that is, $\{x_n\}_{n\in\mathbb{N}}$ is a Cauchy sequence. Since $X$ is complete, this sequence has a limit $x^*$. It remains to be shown that $x^*$ is a fixed point of $F$. Indeed, for each $\varepsilon>0$, there is a point $x_n$ such that $\|x_n-x^*\|<\varepsilon/3$ and $\|x_n-x_{n+1}\|<\varepsilon/3$. We have
  \begin{align*}
    d(x^*, F(x^*)) &\leq \|x_n-x^*\| + d(x_n,F(x_n)) + h(F(x_n),F(x^*)) \\
                   & \leq \|x_n-x^*\| + \|x_n-x_{n+1}\| + L \|x_n-x^*\| < \varepsilon.
  \end{align*}
  As this inequality is true for any $\varepsilon>0$, it follows that $d(x^*, F(x^*)=0$ and therefore $x^*\in F(x^*)$ because $F(x^*)$ is closed.
\end{proof}

\begin{proposition}\label{prop:NotRepeat}
  Let $F\colon D\to \mathcal{K}(D)$ be a Lipschitz mapping with Lipschitz constant $L<1$ and let $\{x_n\}_{n\in\mathbb{N}}$ be as in~\eqref{eq:DefSequence}. If there are $k,p\in\mathbb{N}$, $p\geq 1$, with $x_{k+p}=x_{k}$, then $x_{k+j}=x_{k}\in\fix F$ for all $j\in\mathbb{N}$.
\end{proposition}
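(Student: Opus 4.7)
The plan is to combine the geometric decay estimate~\eqref{eq:FLipIterate} with the elementary observation that $x_{k+p}=x_k$ makes $d(x_{k+p},F(x_{k+p}))$ literally equal to $d(x_k,F(x_k))$. The strategy is to resist the temptation to use $\|x_{k+p}-x_k\|=0$ directly (which by itself is vacuous) and instead to compare the \emph{next} consecutive difference in two different ways.

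First I would iterate~\eqref{eq:FLipIterate}, which says $\|x_{n+1}-x_n\|\leq L\|x_n-x_{n-1}\|$ for $n\geq 1$, exactly $p$ times to obtain
\[
  \|x_{k+p+1}-x_{k+p}\| \;\leq\; L^p\,\|x_{k+1}-x_k\|.
\]
On the other hand, the defining relation $x_{n+1}\in P_{F(x_n)}(x_n)$ gives $\|x_{n+1}-x_n\|=d(x_n,F(x_n))$, so the equality $x_{k+p}=x_k$ yields
\[
  \|x_{k+p+1}-x_{k+p}\| \;=\; d(x_{k+p},F(x_{k+p})) \;=\; d(x_k,F(x_k)) \;=\; \|x_{k+1}-x_k\|.
\]
Substituting this into the previous inequality produces $\|x_{k+1}-x_k\|\leq L^p\|x_{k+1}-x_k\|$, and since $L^p<1$ (using $p\geq 1$ and $L<1$) this forces $\|x_{k+1}-x_k\|=0$, i.e.~$x_{k+1}=x_k$.

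A routine induction based on~\eqref{eq:FLipIterate} then shows $x_{k+j}=x_k$ for every $j\in\mathbb{N}$: assuming $x_{k+j}=x_{k+j-1}$, the inequality $\|x_{k+j+1}-x_{k+j}\|\leq L\|x_{k+j}-x_{k+j-1}\|=0$ propagates the equality one step further. Finally, from $x_{k+1}\in P_{F(x_k)}(x_k)\subset F(x_k)$ together with $x_{k+1}=x_k$, I conclude $x_k\in F(x_k)$, so $x_k\in\fix F$, as asserted. I do not anticipate any real obstacle here; the only conceptual point is recognizing that the periodicity hypothesis should be fed into the \emph{projection distance}, not into the displacement itself.
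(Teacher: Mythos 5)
Your proof is correct and follows essentially the same route as the paper: equate $\|x_{k+p+1}-x_{k+p}\|$ with $\|x_{k+1}-x_k\|$ via the periodicity of the projection distance, bound it by a power of $L$ times $\|x_{k+1}-x_k\|$ via~\eqref{eq:FLipIterate}, and conclude $x_{k+1}=x_k\in\fix F$. Your exponent $L^p$ (rather than the paper's $L^{p-1}$) is in fact the sharper count and conveniently avoids any separate discussion of the case $p=1$, and your explicit induction for $x_{k+j}=x_k$ just spells out what the paper leaves implicit.
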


\begin{proof}
  Since $x_k=x_{k+p}$, we have
  \[
    \|x_{k+1}-x_k\| = d(x_k,F(x_k)) = d(x_{k+p},F(x_{k+p})) = \|x_{k+p+1}-x_{k+p}\|
  \]
  and
  \[
    \|x_{k+p+1}-x_{k+p}\| \leq L^{p-1} \|x_{k+1}-x_{k}\|
  \]
  by~\eqref{eq:FLipIterate}. Combining these inequalities, we obtain
  \[
    \|x_{k+1}-x_k\|\leq L^{p-1} \|x_{k+1}-x_{k}\|,
  \]
  which implies that $x_{p+1}=x_{p}$ and $x_p\in\fix F$ since $x_p\in F(x_p)$.
\end{proof}

\begin{proposition}\label{prop:OnlyFinitelyManyBad}
  Let $f$ and $g$ be strict contractions on $D$ with distinct fixed points $\xi$ and~$\eta$, respectively. Let $\{x_n\}_{n\in\mathbb{N}}$ be a sequence as in~\eqref{eq:DefSequence}. Then the number of elements of $\{x_n\}_{n\in\mathbb{N}}$ for which
  \begin{equation}
    \|x_n-f(x_n)\|=\|x_x-g(x_n)\|
  \end{equation}
  is finite. Moreover, there is $z\in\{\xi,\eta\}$ and an $r_0$ so that for every $0<r<r_0$, there is an index $N\in\mathbb{N}$ such that $x_n\in B[z,r]$ for all $n\geq N$ and $x_n\not\in B[z,r]$ for $n=0,\ldots,N-1$.
\end{proposition}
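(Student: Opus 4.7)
The plan is to apply Proposition~\ref{prop:Convergence} to the set-valued mapping $F=\{f,g\}$ and then analyze the iteration locally near the limit. Since $h(F(x),F(y))\leq\max\{\|f(x)-f(y)\|,\|g(x)-g(y)\|\}\leq L\|x-y\|$ with $L:=\max\{\lip f,\lip g\}<1$, the mapping $F$ is $L$-Lipschitz, and Proposition~\ref{prop:Convergence} supplies a limit $z=\lim_{n\to\infty}x_n\in\fix F$. A point $w$ lies in $\fix F$ precisely when $w=f(w)$ or $w=g(w)$; uniqueness of the fixed points of $f$ and $g$ then forces $\fix F=\{\xi,\eta\}$, so $z\in\{\xi,\eta\}$.

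For the finiteness claim, I would argue by contradiction. If $\|x_n-f(x_n)\|=\|x_n-g(x_n)\|$ held for infinitely many $n$, the continuity of $f$ and $g$ combined with $x_n\to z$ would pass to the limit to give $\|z-f(z)\|=\|z-g(z)\|$. Taking WLOG $z=\xi$, the left-hand side vanishes while the right-hand side equals $\|\xi-g(\xi)\|$, which is strictly positive because $g(\xi)=\xi$ would force $\xi=\eta$, contradicting the hypothesis. The symmetric argument settles the case $z=\eta$.

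For the second assertion, still assuming $z=\xi$, the key step is to establish forward invariance of a sufficiently small closed ball about $\xi$ together with a strict selection of $f$. For $x\in B[\xi,r]$ one has $\|x-f(x)\|\leq(1+\lip f)r$ using $f(\xi)=\xi$, while two applications of the triangle inequality combined with $g(\eta)=\eta$ yield
\[
  \|x-g(x)\|\geq\|\xi-\eta\|-\|x-\xi\|-\|g(x)-g(\eta)\|\geq(1-\lip g)\|\xi-\eta\|-(1+\lip g)r.
\]
Choosing $r_0>0$ small enough that $(1+\lip f)r_0<(1-\lip g)\|\xi-\eta\|-(1+\lip g)r_0$ then guarantees that for every $r<r_0$ and every $x_n\in B[\xi,r]$ the unique nearest point of $F(x_n)$ to $x_n$ is $f(x_n)$; hence $x_{n+1}=f(x_n)$ and $\|x_{n+1}-\xi\|\leq\lip f\cdot r<r$. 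Since $x_n\to\xi$, for any $r\in(0,r_0)$ the smallest index $N$ with $x_N\in B[\xi,r]$ is well-defined, forward invariance gives $x_n\in B[\xi,r]$ for all $n\geq N$, and the minimality of $N$ gives $x_n\notin B[\xi,r]$ for $n=0,\ldots,N-1$. The only mildly delicate point is calibrating $r_0$ in terms of $\lip f$, $\lip g$, and $\|\xi-\eta\|$, but this is routine and poses no real obstacle.
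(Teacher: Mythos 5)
Your proposal is correct and follows essentially the same route as the paper: convergence of $\{x_n\}$ via Proposition~\ref{prop:Convergence} to a limit $z\in\{\xi,\eta\}$, a strict inequality $\|x-f(x)\|<\|x-g(x)\|$ on a small ball around the limit forcing the selection $x_{n+1}=f(x_n)$, forward invariance of that ball by nonexpansiveness, and the first-entry index $N$. The only cosmetic differences are that you derive the strict inequality from explicit Lipschitz bounds rather than a continuity/$\alpha$-threshold estimate, and you obtain finiteness of the equality set by passing to the limit along a subsequence instead of locating it inside $\{x_0,\ldots,x_{N-1}\}$; both are sound.
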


\begin{proof}
  By Proposition~\ref{prop:Convergence}, we know that the sequence $\{x_n\}_{n\in\mathbb{N}}$ converges to a fixed point of $F$. Without loss of generality, we may assume that the sequence converges to~$\xi$.
  The existence of a $k\in\mathbb{N}$, where $x_k$ is a fixed point of $F$, implies that the rest of the sequence remains constant and, since the fixed points of $f$ and $g$ are distinct, we may set~$N:=k$ in order to satisfy the claimed assertion.

  Therefore it remains to consider the case where $x_n\not\in\{\xi,\eta\}$ for all $n\in\mathbb{N}$. We set $\alpha := \min\{\|f(\xi)-g(\xi)\|, \|x_0-\xi\|\}$ and observe that $\alpha >0$ since the fixed points of $f$ and $g$ are distinct, and $x_0\neq\xi$. The continuity of $f$ and $g$ at $\xi$ implies the existence of an $r_0>0$ with $0<r_0<\frac{\alpha}{4}$ such that $\|x-\xi\|<r_0$ implies that $\|f(x)-f(\xi)\|<\frac{\alpha}{4}$ and $\|g(x)-g(\xi)\|<\frac{\alpha}{4}$.

  For $x\in B(\xi,r_0)$, the triangle inequality, when combined with $f(\xi)=\xi$, implies that
  \begin{equation}\label{eq:UniqueByDistance}
    \begin{aligned}
      \|f(x)-x\| & \leq \|f(x)-f(\xi)\|+\|\xi-x\| \leq r_0 + \frac{\alpha}{4} <\frac{\alpha}{2}\qquad\text{and}\\
      \|g(x)-x\| & \geq \|g(\xi)-f(\xi)\| - \|g(\xi)-g(x)\| - \|\xi-x\| \geq \alpha - \frac{\alpha}{4} - r_0 \geq \frac{\alpha}{2},
    \end{aligned}
  \end{equation}
  and hence $\|f(x)-x)\|<\|g(x)-x\|$. Given $0<r<r_0$, we set 
  \[
    N:=\min\{n\in\mathbb{N}\colon \|x_n-\xi\| \leq r\},
  \]
  which exists because $x_n\to \xi$. This implies $x_N\in B(\xi,r)$ and 
$x_{N+1}=f(x_N)$. Since $f$ is nonexpansive, we get 
$\|x_{N+1}-\xi\|=\|f(x_{N})-f(\xi)\|\leq \|x_{N}-\xi\| \leq r$. Therefore 
we can deduce inductively that $x_n\in B(\xi,r)$ and $x_{n+1}=f(x_n)$ for 
all~$n\geq N$. Finally, we may use this bound, the inequality $r<r_0$ 
and~\eqref{eq:UniqueByDistance} to obtain that the set of elements of $\{x_n\}_{n\in\mathbb{N}}$ for which
  \[
    \|x_n-f(x_n)\|=\|x_n-g(x_n)\|
  \]
  is contained in $\{x_0,x_1,\ldots,x_{N-1}\}$ and therefore is finite, as 
asserted. 
\end{proof}

\section{Generic Convergence}
\begin{proposition}\label{Prop:CloseRegular}
  Let $f$ and $g$ be strict contractions on $D$ with distinct fixed points $\xi$ and~$\eta$, respectively. Moreover, assume that there is $\theta_0>1$ such that 
  \begin{equation}
    \theta f(x) + (1-\theta) x\in D \qquad \text{and} \qquad \theta g(x) + (1-\theta) x\in D
  \end{equation}
  for every $x\in D$ and every $\theta<\theta_0$. Let $\{x_n\}_{n\in\mathbb{N}}$ be a sequence as in~\eqref{eq:DefSequence}. Then for each $\varepsilon>0$, there are strict contractions $\varphi$ and $\psi$ on $D$ such that $H(\{f,g\},\{\varphi,\psi\})<\varepsilon$, all metric projections $P_{\{\varphi(x_n),\psi(x_n)\}}(x_n)$ are unique and $\{x_n\}_{n\in\mathbb{N}}$ satisfies~\eqref{eq:DefSequence} for $\{\varphi,\psi\}$.
\end{proposition}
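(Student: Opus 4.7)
The plan is to exploit Proposition~\ref{prop:OnlyFinitelyManyBad}, which tells us that only finitely many indices $n$ satisfy $\|f(x_n)-x_n\| = \|g(x_n)-x_n\|$, and to resolve each such ``tie'' by a localised perturbation provided by Lemma~\ref{lem:perturbation}. Let $B := \{n \in \mathbb{N} : \|x_n-f(x_n)\| = \|x_n-g(x_n)\|\}$, which is finite, and partition $B = B_f \sqcup B_g$ according to whether the choice in~\eqref{eq:DefSequence} produced $x_{n+1}=f(x_n)$ or $x_{n+1}=g(x_n)$. For each $n_j \in B$, we have $x_{n_j}\notin\{\xi,\eta\}$, because if $x_{n_j}=\xi$ then $\|f(x_{n_j})-x_{n_j}\|=0$ and the equality defining $B$ would force $\|g(x_{n_j})-x_{n_j}\|=0$, hence $\xi=\eta$ (and symmetrically for $\eta$). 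Together with Proposition~\ref{prop:NotRepeat} this shows that no other $x_m$, $m \neq n_j$, coincides with $x_{n_j}$, since such a repetition would place $x_{n_j}\in\fix F \subset\{\xi,\eta\}$.

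Because $\{x_n\}$ accumulates only at $\xi$ or $\eta$ (Propositions~\ref{prop:Convergence} and~\ref{prop:OnlyFinitelyManyBad}), I may choose pairwise disjoint closed balls $\overline{B}(x_{n_j},\sigma_j)$ that moreover avoid every $x_m$ with $m\neq n_j$. For each $n_j \in B_f$ I then apply Lemma~\ref{lem:perturbation} to $g$ with centre $x_{n_j}$, radius $\sigma_j$ and tolerance less than $\varepsilon$, producing a strict contraction that agrees with $g$ outside the ball and satisfies $\tilde g(x_{n_j}) = g(x_{n_j}) + c_j(g(x_{n_j})-x_{n_j})$ for some $c_j>0$. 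Performing this one index at a time gives the final mapping $\psi$. Symmetrically, for each $n_j\in B_g$ I perturb $f$ at $x_{n_j}$ to obtain $\varphi$.

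The main technical point to verify is that the iterated use of Lemma~\ref{lem:perturbation} is legitimate. Each application requires the current mapping to enjoy the hypothesis of Lemma~\ref{lem:PropD}, i.e.\ $\theta\,\tilde g(x) + (1-\theta)x\in D$ for some $\theta_0>1$, and shrinks this constant slightly: a direct computation on the representation $\tilde g(x)=\mu(x)g(x)+(1-\mu(x))x$ with $\mu(x)=1+\gamma(x) < (\theta_0+1)/2$ shows that the new mapping still satisfies this hypothesis with $\theta_0$ replaced by $2\theta_0/(\theta_0+1) > 1$. Since we perform only finitely many steps, both property (d) and strict contractivity survive throughout, and because the perturbation balls are pairwise disjoint the individual perturbations do not accumulate: we obtain $d_\infty(f,\varphi), d_\infty(g,\psi) < \varepsilon$ directly, and hence by Lemma~\ref{lem:SetValDiff}, $H(\{f,g\},\{\varphi,\psi\}) < \varepsilon$.

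The remaining verification splits into two cases. For $n \notin B$, the point $x_n$ lies outside every perturbation ball, so $\{\varphi(x_n),\psi(x_n)\} = \{f(x_n),g(x_n)\}$; the projection is unique because $\|f(x_n)-x_n\|\neq\|g(x_n)-x_n\|$, and $x_{n+1}$ remains the (same) unique nearest point. For $n=n_j\in B_f$, $\varphi(x_{n_j}) = f(x_{n_j}) = x_{n_j+1}$ is untouched while $\|\psi(x_{n_j}) - x_{n_j}\| = (1+c_j)\|g(x_{n_j})-x_{n_j}\| > \|f(x_{n_j})-x_{n_j}\|$, so the projection selects $\varphi(x_{n_j})$ uniquely; the case $n_j\in B_g$ is symmetric. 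This establishes both of the desired properties.
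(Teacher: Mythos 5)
Your proposal is correct and follows essentially the same route as the paper's proof: isolate the finitely many tie indices via Proposition~\ref{prop:OnlyFinitelyManyBad}, use Proposition~\ref{prop:NotRepeat} to see these points are distinct from all other terms, break each tie by a perturbation from Lemma~\ref{lem:perturbation} supported in pairwise disjoint balls avoiding the rest of the sequence, and conclude with Lemma~\ref{lem:SetValDiff}. In fact, your explicit verification that the $\theta_0$-condition survives each perturbation (with constant $2\theta_0/(\theta_0+1)$) makes precise a point the paper's inductive construction leaves implicit.
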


\begin{proof}
  Without loss of generality, we may assume that $x_n\to\xi$. By Proposition~\ref{prop:OnlyFinitelyManyBad}, there is an $r_0>0$ such that for all $0<r<r_0$, there is an $N\in\mathbb{N}$ so that $x_n\in B[\xi,r]$ for $n\geq N$ and the set of elements of $\{x_n\}_{n\in\mathbb{N}}$ with
  \[
    \|x_n-f(x_n)\|=\|x_n-g(x_n)\|
  \]
  is contained in $\{x_0,x_1,\ldots,x_{N-1}\}$. Since $x_0,x_1,\ldots,x_{N-1}\not\in B[\xi,r]$, Proposition~\ref{prop:NotRepeat} implies that none of these points coincide. Define
  \[
    \sigma := \min\left\{1, \frac{\|x_i-x_j\|}{2}, \|x_i-\xi\|-r\colon i,j=0,1,\ldots N-1\right\}
  \]
  and note that the above arguments imply that $\sigma >0$. By the definition of $\sigma$, we see that the balls $B(x_i,\sigma)$, $\sigma=0,1,\ldots,N-1$, and $B(\xi,r)$ are pairwise disjoint.

  Next, we define the mappings $\varphi$ and $\psi$ inductively: we start with setting $\varphi_0:=f$ and $\psi_0:=g$. Now for a fixed  $k\in\{1,\ldots,N-1\}$, assume that $\varphi_{k-1}$ and $\psi_{k-1}$ have already been defined. 

  If $\|f(x_{k-1})-x_{k-1}\|\neq\|g(x_{k-1})-x_{k-1}\|$, then we set $\varphi_k:=\varphi_{k-1}$ and $\psi_k:=\psi_{k-1}$. Otherwise, that is, if $\|f(x_{k-1})-x_{k-1}\|=\|g(x_{k-1})-x_{k-1}\|$, then we proceed as follows: 

  If $x_{k}=f(x_{k-1})$, then we set $\varphi_{k}:=\varphi_{k-1}$ and use Lemma~\ref{lem:perturbation} to obtain a strict contraction $\psi_k$ with the following properties: 
  \begin{enumerate}[(i)]
    \item \label{eq:PerCond1} $\|\psi_k(x)-\psi_{k-1}(x)\| < \varepsilon$ 
    for all $x\in B(x_{k-1},\sigma)$,
    \item \label{eq:PerCond2} $\psi_k(x)=\psi_{k-1}(x)$ 
    for all $x\not\in B(x_{k-1},\sigma)$ and 
    \item \label{eq:PerCond3} $\psi_k(x_{k-1}) = g(x_{k-1}) + c(g(x_{k-1})-x_{k-1})$ for some $c>0$ and hence 
      \[
        \|\varphi_{k}(x_{k-1})-x_{k-1}\| = \|f(x_{k-1})-x_{k-1}\| = \|g(x_{k-1})-x_{k-1}\| < \|\psi_{k}(x_{k-1})-x_{k-1}\|.
      \]
  \end{enumerate}
  Since the balls $B(x_i,\sigma)$, $\sigma=0,1,\ldots,N-1$, and $B(\xi,r)$ 
are pairwise disjoint, \eqref{eq:PerCond1}~implies that 
$\|\psi_{k}(x)-g(x)\|<\varepsilon$ for all $x\in D$, 
and \eqref{eq:PerCond2}~implies that $\psi_k(x_n)=g(x_n)$ for all $n\geq N$ 
and $\psi_{k}(x_n)=\psi_n(x_n)$ for all $n=0,1,\ldots,k-1$. 
Finally, note that~\eqref{eq:PerCond3} implies that 
  \[
    P_{\{\varphi_k(x_{k-1}),\psi_{k}(x_{k-1})\}}(x_{k-1})= \{x_k\},
  \]
  that is, the metric projection is unique. 

  If on the other hand,  $x_{k}=g(x_{k-1})$, then we set $\psi_{k}:=\psi_{k-1}$ and use the above procedure to obtain a strict contraction $\varphi_k$ which satisfies $\varphi_k(x_n)=f(x_n)$ for all $n\geq N$, $\varphi_k(x_n)=\varphi_n(x_n)$ for the indices~$n=0,1,\ldots,k-1$, $\|\varphi_k(x_{k-1})-x_{k-1}\|> \|\psi_k(x_{k-1})-x_{k-1}\|$ and $\|f(x)-\varphi_k(x)\|<\varepsilon$ for all $x\in D$.

  Setting $\varphi:=\varphi_N$, $\psi:=\psi_N$ and using 
Lemma~\ref{lem:SetValDiff}, we finish the proof.
\end{proof}

\begin{proposition}\label{Prop:Holes}
    Let $f$ and $g$ be strict contractions on $D$ with distinct fixed points $\xi$ and~$\eta$, respectively. In addition, let $\{x_n\}_{n\in\mathbb{N}}$ be a regular sequence of successive approximations for $\{f,g\}$. Then there are an $\varepsilon_0>0$ and an $\alpha>0$ so that for all $0<\varepsilon<\varepsilon_0$ and all $\{\varphi,\psi\}\in B(\{f,g\},\alpha\varepsilon)$, every sequence $\{y_n\}_{n\in\mathbb{N}}$ of successive approximations for $\{\varphi,\psi\}$ with $y_0=x_0$ is regular and satisfies $\|x_n-y_n\|\leq \varepsilon$ for all $n\in\mathbb{N}$.
\end{proposition}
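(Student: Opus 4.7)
The plan is to prove the proposition by induction on $n$, using a uniform quantitative gap in the distances $\|x_n-f(x_n)\|$ and $\|x_n-g(x_n)\|$ to ensure that each metric projection step for $\{\varphi,\psi\}$ is unique and its outcome stays close to the corresponding step for $\{f,g\}$. The regularity assumption is used to conclude that this gap is strictly positive at every index; the main work is to promote this pointwise positivity into a \emph{uniform} positive lower bound $\delta>0$, which then lets me pick the ball radius proportional to $\varepsilon$.

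First I would invoke Proposition~\ref{prop:OnlyFinitelyManyBad}, assuming without loss of generality that $x_n\to\xi$. For a suitably small $r\in(0,r_0)$, there exists $N\in\mathbb{N}$ with $x_n\in B[\xi,r]$ for all $n\geq N$. Inspecting the estimates~\eqref{eq:UniqueByDistance} inside the proof of Proposition~\ref{prop:OnlyFinitelyManyBad} yields a uniform lower bound $\|x_n-g(x_n)\|-\|x_n-f(x_n)\|\geq\gamma_0>0$ on $B[\xi,r]$, and on the finitely many initial indices $n=0,1,\ldots,N-1$, regularity gives $\bigl|\|x_n-f(x_n)\|-\|x_n-g(x_n)\|\bigr|>0$. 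Taking the minimum of $\gamma_0$ together with these finitely many positive values yields a uniform gap $\delta>0$ valid for all $n\in\mathbb{N}$.

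Next, set $L:=\max\{\lip f,\lip g\}<1$, fix $\alpha\in(0,1-L)$ and define $\varepsilon_0:=\delta/(2(1+L+\alpha))$. Given $\varepsilon<\varepsilon_0$ and $\{\varphi,\psi\}\in B(\{f,g\},\alpha\varepsilon)$, Lemma~\ref{lem:SetValDiffInv} allows me to relabel so that $d_\infty(f,\varphi),\,d_\infty(g,\psi)<\alpha\varepsilon$. I then prove by induction on $n$ that $\|x_n-y_n\|\leq\varepsilon$ and that $y_{n+1}$ is uniquely determined. In the inductive step, supposing without loss of generality that $x_{n+1}=f(x_n)$, two triangle inequality computations yield
\[
\|y_n-\varphi(y_n)\|\leq\|x_n-f(x_n)\|+(1+L+\alpha)\varepsilon,\qquad\|y_n-\psi(y_n)\|\geq\|x_n-g(x_n)\|-(1+L+\alpha)\varepsilon,
\]
so their difference is at least $\delta-2(1+L+\alpha)\varepsilon>0$, forcing $y_{n+1}=\varphi(y_n)$ to be the unique projection. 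A final triangle estimate then yields $\|x_{n+1}-y_{n+1}\|\leq(L+\alpha)\varepsilon\leq\varepsilon$, closing the induction. The main obstacle is the uniformization of the gap $\delta$: once that is in place, the rest of the argument is a quantitative induction, and the labeling ambiguity coming from the Hausdorff metric is dispatched once and for all by Lemma~\ref{lem:SetValDiffInv} at the start.
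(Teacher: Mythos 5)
Your argument is correct, and it reaches the conclusion by a route that is organized differently from the paper's. The paper splits the analysis into two phases: on the finite initial segment $k=0,\dots,N$ it lets the error accumulate additively, proving $\|x_k-y_k\|<k\alpha\varepsilon$ and compensating by forcing $\alpha\leq\sigma/(4N)$, and for the tail $k\geq N$ it abandons any gap at the points $x_k$ altogether, arguing instead that $\varphi$ maps $B(\xi,\varepsilon/2)$ into itself while $\|\psi(z)-z\|>\varepsilon\geq\|\varphi(z)-z\|$ there, so the iterates $y_k$ are trapped in that ball and the bound $\|x_k-y_k\|\leq\varepsilon$ is just its diameter. You instead uniformize the gap once and for all: the finitely many positive gaps at $x_0,\dots,x_{N-1}$ together with the estimates of type~\eqref{eq:UniqueByDistance} near $\xi$ (which you must re-derive from $f(\xi)=\xi\neq g(\xi)$ and continuity, since the statement of Proposition~\ref{prop:OnlyFinitelyManyBad} itself only records finiteness, not a quantitative bound---but this is immediate) give a single $\delta>0$ valid at every $x_n$, and then one induction with the damping factor $L+\alpha<1$ keeps $\|x_n-y_n\|\leq\varepsilon$ without any accumulation, so your $\alpha$ need not shrink like $1/N$ and in fact you get the sharper bound $\|x_n-y_n\|\leq\alpha\varepsilon/(1-L)$. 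Your explicit appeal to Lemma~\ref{lem:SetValDiffInv} to fix the labeling $d_\infty(f,\varphi),d_\infty(g,\psi)<\alpha\varepsilon$ is also a point where you are more careful than the paper, which passes from the Hausdorff ball to the two $d_\infty$-balls without comment; since the conclusion is symmetric in $\varphi$ and $\psi$, this relabeling is indeed harmless.

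One shared caveat, not a defect of your proof relative to the paper's: when you say that regularity gives $\bigl|\|x_n-f(x_n)\|-\|x_n-g(x_n)\|\bigr|>0$ at the initial indices, you are tacitly excluding the case $f(x_n)=g(x_n)$, in which the projection is a singleton although the two distances coincide; the paper makes exactly the same tacit assumption when it asserts that its constant $\sigma$ is positive, so your treatment is on the same footing as the original.
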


\begin{proof}
  By assumption, the sequence $\{x_n\}_{n\in\mathbb{N}}$ converges. Without loss of generality, we may assume that it converges to a fixed point~$\xi$ of~$f$. Again by assumption, $\xi$ is not a fixed point of~$g$. We set 
  \[
    \varepsilon_0 := \min\left\{\frac{\|g(\xi)-\xi\|}{3},\frac{1}{2}, d_\infty(f, g) \right\}
  \]
  and assume $0<\varepsilon<\varepsilon_0$ to be given. Since $x_n\to \xi$, 
there is an $N\in\mathbb{N}$ such that $x_n\in B(\xi,\varepsilon/4)$ for 
all $n\geq N$. We set
  \[
    \sigma := \min\left\{1,\big|\|f(x_k)-x_k\|-\|g(x_k)-x_k\|\big|\colon k=0,\ldots, N\right\}
  \]
  and
  \[
    \alpha:=\min\left\{\frac{1-\max\{\lip f,\lip g\}}{2}, \frac{\sigma}{4N}\right\},
  \]
  which is positive since $f$ and $g$ are strict contractions and $\{x_n\}_{n\in\mathbb{N}}$ is a regular sequence.

  Now let $\varphi\in B(f,\alpha\varepsilon)$ and $\psi\in B(g,\alpha\varepsilon)$ be arbitrary. Observe that for $z\in B(\xi,\varepsilon/2)$, 
the conditions $d_\infty(\varphi,f) < \alpha\varepsilon$ and 
$\|z-\xi\|<\varepsilon/2$, when combined with the triangle inequality, imply that
  \begin{equation}\label{eq:RestOfSequence}
    \|\varphi(z)-\xi\| < \alpha\varepsilon + \|f(z)-f(\xi)\| \leq (2\alpha+\lip f) \frac{\varepsilon}{2} \leq \frac{\varepsilon}{2},
  \end{equation}
  that is, $\varphi$ maps $B(\xi,\varepsilon/2)$ into itself.

  Let the sequence $\{y_k\}_{k\in\mathbb{N}}$ satisfy
  \[
    y_0:=x_0\qquad\text{and}\qquad
    y_{k+1}\in P_{\{\varphi(y_k),\psi(y_k)\}}(y_k).
  \]

  We show by induction that $\|x_k-y_k\|< k\alpha\varepsilon$ for 
$k=0,\ldots,N$. For $k=0$, this statement is true by the definition of 
$y_0$. 
Assume now that we have already proved the bound for the difference of $x_k$ and $y_k$.
  If $x_{k+1}=f(x_k)$, we get
  \begin{equation}
    \begin{aligned}
      \|\varphi(y_k)-x_{k+1}\| & \leq \|\varphi(y_k)-f(y_k)\|+\|f(y_k)-f(x_k)\|\\
                              & < \alpha\varepsilon + k\alpha\varepsilon = (k+1)\alpha\varepsilon
  \end{aligned}
  \end{equation}
  and 
  \begin{equation}\label{eq:NewSeqReg}
  \begin{aligned}
    \|\varphi(y_{k})-y_{k}\| & \leq \|f(x_k)-x_k\| + \|f(x_k)-\varphi(x_k)\| + \|\varphi(x_k)-\varphi(y_k)\| + \|x_k-y_k\|\\
                             & \leq \|f(x_k)-x_k\| + \alpha\varepsilon + 2\|x_k-y_k\|\leq  \|f(x_k)-x_k\| + (2k+1)\alpha\varepsilon\\
                             & \leq \|g(x_k)-x_k\|+(2k+1)\alpha\varepsilon-4N\alpha \leq \|\psi(y_k)-y_k\|+(2k+1-4N)\alpha\\
                             &  < \|\psi(y_k)-y_k\|
  \end{aligned}
  \end{equation}
  because $\|g(x_k)-x_k\|-\|f(x_k)-x_k\| \geq 4N\alpha $ and $\varepsilon\leq \frac{1}{2}$. Hence 
  \[
    y_{k+1}=\varphi(y_k)\quad\text{and}\quad \|y_{k+1}-x_{k+1}\|\leq (k+1)\alpha\varepsilon, 
  \]
  as claimed. A similar argument works for the case where $x_{k+1}=g(x_k)$. 

  Now we may use $\alpha\leq\frac{1}{4N}$ to deduce that
  \begin{equation}\label{eq:DistBeginning}
    \|y_k-x_k\|\leq \frac{k}{4N}\varepsilon\leq\varepsilon/4
  \end{equation}
  for $k=0,1,\ldots,N$. 

  For $z\in B(\xi,\varepsilon/2)$, the bound
  \[
    \|\psi(z)-z\| > \|g(\xi)-\xi\| - \alpha\varepsilon - 2\|\xi-z\| > \|g(\xi)-\xi\|-2\varepsilon > \varepsilon
  \]
  shows, when combined with~\eqref{eq:RestOfSequence}, that $\|\psi(z)-z\|>\|\varphi(z)-z\|$ in this case. Since $y_N\in B(\xi,\varepsilon/2)$ by~\eqref{eq:DistBeginning} and the definition of $N$, this implies that $y_{k+1}=\varphi(y_k)$ for $k\geq N$ and that the whole sequence $\{y_n\}_{n\in\mathbb{N}}$ is regular. Again by~\eqref{eq:DistBeginning} and since $\varphi$ maps $B(\xi,\varepsilon/2)$ into itself, we conclude that $y_k\in B(\xi,\varepsilon/2)$ for all $k\geq N$ and hence $\|x_k-y_k\|\leq \diam B(\xi,\varepsilon/2)=\varepsilon$, as asserted.
\end{proof}

\begin{theorem}\label{thm:GenericConv1}
  Let $u\in D$. There is a residual set $\mathcal{M}_*\subset\mathcal{M}$ such that for every mapping $\{\varphi,\psi\}\in\mathcal{M}_*$, the sequence of successive approximations is regular (and therefore unique) and converges to a fixed point of $\varphi$ or of $\psi$.
\end{theorem}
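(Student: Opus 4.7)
For each $n\in\mathbb{N}$, let $\mathcal{U}_n\subset\mathcal{M}$ denote the set of those $F$ for which there exist $\alpha>0$ and $N\in\mathbb{N}$ such that, for every $G\in\mathcal{M}$ with $H(F,G)<\alpha$, every sequence $\{y_k\}_{k\in\mathbb{N}}$ of successive approximations for $G$ with $y_0=u$ is regular and satisfies $\|y_j-y_k\|<1/n$ for all $j,k\geq N$. The set $\mathcal{U}_n$ is open by construction: if $F\in\mathcal{U}_n$ is witnessed by $(\alpha,N)$, then every $F'$ with $H(F,F')<\alpha/2$ belongs to $\mathcal{U}_n$ with witnesses $(\alpha/2,N)$. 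I will take $\mathcal{M}_* := \bigcap_{n=1}^{\infty} \mathcal{U}_n$, a $G_\delta$ set, so that the entire task reduces to proving that each $\mathcal{U}_n$ is dense in $\mathcal{M}$.

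\textbf{Density, reduction to a clean triple.} Given $F\in\mathcal{M}$ and $\delta>0$, I build an element of $\mathcal{U}_n$ at Hausdorff distance less than $\delta$ from $F$ by four successive perturbations, each of size at most $\delta/4$. First, by density of $\mathcal{N}$ in $\mathcal{M}$ combined with Lemma~\ref{lem:SetValDiff}, one may replace $F$ by $\{f_1,g_1\}$ with both $f_1,g_1$ non-constant strict contractions. Second, if the unique fixed points of $f_1,g_1$ happen to coincide, Lemma~\ref{lem:DiffFixedPoint} supplies a strict contraction $f_2$ close to $f_1$ with a different fixed point, so that $\{f_2,g_1\}$ has distinct fixed points. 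Third, applying Lemma~\ref{lem:PropD} to each of $f_2$ and $g_1$ yields strict contractions $f_3,g_3$ close to $f_2,g_1$ for which the hypothesis of Proposition~\ref{Prop:CloseRegular} holds with a common $\theta_0>1$; since that lemma's perturbation is of the form $(1-t)x+tf(x)$, fixed points and hence their distinctness are preserved.

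\textbf{Density, regularization and the open neighborhood.} Let $\{x_k\}$ be any sequence of successive approximations for $\{f_3,g_3\}$ starting at $x_0=u$. Proposition~\ref{Prop:CloseRegular} furnishes strict contractions $\varphi,\psi$, with $H(\{f_3,g_3\},\{\varphi,\psi\})$ as small as one wishes, such that $\{x_k\}$ is a \emph{regular} sequence of successive approximations for $\{\varphi,\psi\}$; by taking this perturbation sufficiently small, Lemma~\ref{lem:DisFixedPoints} keeps the fixed points of $\varphi$ and $\psi$ distinct. Proposition~\ref{Prop:Holes} then applies to $\{\varphi,\psi\}$ and the regular base sequence $\{x_k\}$: there exist $\varepsilon_0,\alpha_0>0$ such that, for every $0<\varepsilon<\varepsilon_0$, every $G\in\mathcal{M}$ with $H(G,\{\varphi,\psi\})<\alpha_0\varepsilon$ admits only regular successive-approximation sequences from $u$, each uniformly within $\varepsilon$ of $\{x_k\}$. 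Since $\{x_k\}$ is convergent by Proposition~\ref{prop:Convergence}, I fix $N$ with $\|x_j-x_k\|<1/(2n)$ for $j,k\geq N$ and choose $\varepsilon<1/(4n)$; this places $\{\varphi,\psi\}$ in $\mathcal{U}_n$ with witnesses $(\alpha_0\varepsilon,N)$, while the total Hausdorff distance from $F$ remains below $\delta$.

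\textbf{Conclusion and principal obstacle.} For any $\{\varphi,\psi\}\in\mathcal{M}_*$, membership in every $\mathcal{U}_n$ forces the successive-approximation sequence $\{y_k\}$ from $u$ to be regular (hence uniquely determined) and Cauchy; completeness of $X$ yields a limit $y^*\in D$. Since $y_{k+1}\in\{\varphi(y_k),\psi(y_k)\}$ for every $k$, pigeonhole produces an infinite subsequence along which the same mapping, say $\varphi$, is used throughout, and continuity of $\varphi$ forces $y^*=\varphi(y^*)$. The main technical burden is the density step: one has to chain the four perturbations above so that at the final stage all hypotheses of Proposition~\ref{Prop:Holes} (strict contractivity of each component, distinct fixed points, the $\theta_0$-condition of Lemma~\ref{lem:PropD}, and regularity of the chosen base sequence) hold simultaneously within the prescribed Hausdorff budget $\delta$.
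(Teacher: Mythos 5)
Your proposal is correct and follows essentially the same strategy as the paper: a density argument chaining Lemmata~\ref{lem:DiffFixedPoint}, \ref{lem:PropD} and \ref{lem:SetValDiff} with Proposition~\ref{Prop:CloseRegular}, followed by the stability estimate of Proposition~\ref{Prop:Holes} to build the residual set and extract a Cauchy sequence. Your packaging via explicitly open sets $\mathcal{U}_n$ (instead of the paper's unions of balls centered at $\mathcal{N}_*$) and your pigeonhole-plus-continuity ending are only cosmetic variants of the paper's argument.
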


\begin{proof}
  We denote by $\mathcal{N}_{*}$ the set of all mappings $\{f,g\}\in\mathcal{N}$ such that $f$ and $g$ have distinct fixed points and every sequence $\{x_n\}_{n\in\mathbb{N}}$ of successive approximations for $\{f,g\}$ with $x_0=u$ is regular.

  Lemmata~\ref{lem:DiffFixedPoint},~\ref{lem:PropD} and~\ref{lem:SetValDiff}, when combined with Proposition~\ref{Prop:CloseRegular}, imply that $\mathcal{N}_{*}$ is dense in $\mathcal{N}$. Since $\mathcal{N}$ is dense in $\mathcal{M}$, we may deduce that $\mathcal{N}_{*}$ is a dense subset of $\mathcal{M}$. We define
  \begin{equation}\label{eq:GoodSet}
    \mathcal{M}_{*} := \bigcap_{i=1}^{\infty} \;\;\bigcup_{\{f,g\}\in\mathcal{N}_*} B_{\mathcal{M}}\left(\{f,g\},\min\left\{\frac{\alpha_{\{f,g\}}\varepsilon_{0,\{f,g\}}}{2},\frac{\alpha_{\{f,g\}}}{i}\right\}\right),
  \end{equation}
  where $\alpha_{\{f,g\}}$ and $\varepsilon_{0,\{f,g\}}$ are given by Proposition~\ref{Prop:Holes}. Since $\mathcal{N}_*\subset\mathcal{M}$ is dense, we see immediately that $\mathcal{M}_*$ is a dense $G_\delta$-set.

  Proposition~\ref{Prop:Holes} guarantees that every sequence of 
successive approximations $\{x_n\}_{n\in\mathbb{N}}$ with respect to any $\{\varphi,\psi\}\in\mathcal{M}_*$ and where $x_0=u$, is regular. 
Therefore it remains to be shown that $\{x_n\}_{n\in\mathbb{N}}$ converges 
to a fixed point of $\varphi$ or of $\psi$.

  To this end, we first show that $\{x_n\}_{n\in\mathbb{N}}$ is a Cauchy 
sequence. 
Given $\varepsilon>0$, we choose a natural number $i > \frac{3}{\varepsilon}$. For all $\{\varphi,\psi\}\in\mathcal{M}_{*}$, by definition, there is an element $\{f,g\}\in\mathcal{N}_{*}$ with 
  \[  H(\{\varphi,\psi\},\{f,g\})<\min\left\{\frac{\alpha_{\{f,g\}}\varepsilon_{0,\{f,g\}}}{2},\frac{\alpha_{\{f,g\}}}{i}\right\}.
  \]
  Hence, by Proposition~\ref{Prop:Holes}, the sequence $\{y_n\}_{n\in\mathbb{N}}$ of successive approximations with respect to $\{f,g\}$ with initial point $y_0=u$ satisfies
  \begin{equation}
    \|x_n-y_n\| \leq \min\left\{\frac{\varepsilon_{0,\{f,g\}}}{2},\frac{1}{i}\right\}\leq \frac{1}{i}
  \end{equation}
  for all $n\in\mathbb{N}$.
  Since $\{y_n\}$ is a convergent sequence, there is an $N\in\mathbb{N}$ 
such 
that $\|y_n-y_m\|<\frac{\varepsilon}{3}$ for all $m,n\geq N$. 
Therefore, by the triangle inequality, we have
  \begin{align*}
    \|x_n-x_m\| & \leq \|x_n-y_n\|+\|y_n-y_m\|+\|y_m-x_m\| \leq \frac{2}{i} +\frac{\varepsilon}{3} <\varepsilon
  \end{align*}
  for all $m,n\geq N$, that is, $\{x_n\}_{n\in\mathbb{N}}$ is indeed a 
Cauchy sequence.

  Since $D\subset X$ is closed, the sequence $\{x_n\}_{n\in\mathbb{N}}$ converges to a point $x^*\in D$. Hence for all $\varepsilon>0$, there is an $N\in\mathbb{N}$ such that $\|x_n-x^*\|<\frac{\varepsilon}{2}$ for all $n\geq N$. We can use this bound to obtain
  \begin{align*}
    d(x^*,\{\varphi(x^*),\psi(x^*)\}) &\leq d(x^*, \{\varphi(x_n),\psi(x_n)\}) + h(\{\varphi(x_n),\psi(x_n)\}, \{\varphi(x^*),\psi(x^*)\})\\ 
                                      &\leq \|x_{n+1}-x^*\|+\|x_n-x^*\| < \varepsilon
  \end{align*}
  because $\{\varphi,\psi\}\in\mathcal{M}$ and $x_{n+1}\in \{\varphi(x_n),\psi(x_n)\}$. Since $\varepsilon>0$ is arbitrary, we may conclude that $d(x^*,\{\varphi(x^*),\psi(x^*)\})=0$ and therefore either $\varphi(x^*)=x^*$ or $\psi(x^*)=x^*$. In other words, the sequence
  $\{x_n\}_{n\in\mathbb{N}}$ converges to a fixed point of $\varphi$ or $\psi$.
\end{proof}

The above result is also true for the metric of uniform convergence, or in 
other words, for the space $\mathcal{M}_{\infty} = 
(\mathcal{M}, h_{\infty})$. However, in order to prove this statement, we 
need some preparation. Since $h_{\infty}(F,G)\leq H(F,G)$, the inequality  
$H(F,G)<\varepsilon$ implies that $h_{\infty}(F,G)<\varepsilon$, and so we 
only need the following variant of Proposition~\ref{Prop:Holes} for this 
new case.

\begin{proposition}\label{Prop:HolesUniform}
  Let $f$ and $g$ be strict contractions on $D$ with distinct fixed points $\xi$ and~$\eta$, respectively. In addition, let $\{x_n\}_{n\in\mathbb{N}}$ be a regular sequence of successive approximations for $\{f,g\}$. Then there are an $\varepsilon_0>0$ and an $\alpha>0$ so that for all $0<\varepsilon<\varepsilon_0$, and all $\{\varphi,\psi\}\in B_{h_{\infty}}(\{f,g\},\alpha\varepsilon)$, every sequence $\{y_n\}_{n\in\mathbb{N}}$ of successive approximations for $\{\varphi,\psi\}$ with $y_0=x_0$ is regular and satisfies $\|x_n-y_n\|\leq \varepsilon$ for all $n\in\mathbb{N}$.

\end{proposition}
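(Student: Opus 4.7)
The plan is to run the proof of Proposition~\ref{Prop:Holes} essentially verbatim, with the same constants $\varepsilon_0$, $N$, $\sigma$, $\alpha$, but with one conceptual modification. The difficulty is that $h_\infty$-closeness of $\{\varphi,\psi\}$ to $\{f,g\}$ only gives a \emph{pointwise} pairing between the two pairs of values, and this pairing may flip from one point of $D$ to another; we can no longer speak of ``$\varphi$ being globally close to $f$''. Instead, at each iterate $y_k$ we invoke
\[
  h\bigl(\{f(y_k),g(y_k)\},\{\varphi(y_k),\psi(y_k)\}\bigr)\leq h_{\infty}(\{f,g\},\{\varphi,\psi\})<\alpha\varepsilon
\]
to pick an element $a_k\in\{\varphi(y_k),\psi(y_k)\}$ with $\|a_k-f(y_k)\|<\alpha\varepsilon$, and let $b_k$ denote the other element of that set; provided $\|f(y_k)-g(y_k)\|>2\alpha\varepsilon$, the choice is forced and $\|b_k-g(y_k)\|<\alpha\varepsilon$ as well.

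First I would copy over the setup of Proposition~\ref{Prop:Holes}: WLOG $x_n\to\xi=\fix f$ with $g(\xi)\neq\xi$; take $\varepsilon_0:=\min\{\|g(\xi)-\xi\|/3,\,1/2,\,d_\infty(f,g)\}$, choose $N$ so that $x_n\in B(\xi,\varepsilon/4)$ for $n\geq N$, set $\sigma:=\min\{1,\bigl|\|f(x_k)-x_k\|-\|g(x_k)-x_k\|\bigr|:k=0,\dots,N\}$, which is positive by regularity, and $\alpha:=\min\{(1-\max\{\lip f,\lip g\})/2,\,\sigma/(4N)\}$.

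Next, the inductive step proceeds along the same lines, with the substitution ``$\varphi(y_k)\mapsto a_k$, $\psi(y_k)\mapsto b_k$''. In the case $x_{k+1}=f(x_k)$ the computation leading to \eqref{eq:NewSeqReg} gives
\[
  \|a_k-y_k\|\leq\|f(x_k)-x_k\|+(2k+1)\alpha\varepsilon,\qquad \|b_k-y_k\|\geq\|g(x_k)-x_k\|-(2k+1)\alpha\varepsilon,
\]
and the gap $\|g(x_k)-x_k\|-\|f(x_k)-x_k\|\geq 4N\alpha$ together with $\varepsilon\leq 1/2$ and $k<N$ forces $\|a_k-y_k\|<\|b_k-y_k\|$. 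Since $y_{k+1}$ is by definition the element of $\{a_k,b_k\}$ nearest to $y_k$, this yields $y_{k+1}=a_k$ and
\[
  \|y_{k+1}-x_{k+1}\|\leq \|a_k-f(y_k)\|+\|f(y_k)-f(x_k)\|\leq(k+1)\alpha\varepsilon.
\]
The gap estimate also ensures $\|f(y_k)-g(y_k)\|$ remains larger than $2\alpha\varepsilon$, so the selection of $a_k,b_k$ is unambiguous at every step. The symmetric case $x_{k+1}=g(x_k)$ is analogous. For the tail $k\geq N$, the same two-sided comparison in $B(\xi,\varepsilon/2)$ (where $\|f(z)-z\|$ is $O(\varepsilon)$ and $\|g(z)-z\|\geq 2\varepsilon$) identifies the $a$-type element as the closer one, and the choice $\alpha\leq(1-\lip f)/2$ keeps this element inside $B(\xi,\varepsilon/2)$, so the induction carries the whole tail into that ball, giving the uniform bound $\|x_n-y_n\|\leq\varepsilon$ and regularity.

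The main obstacle is not a genuinely new technical difficulty but the bookkeeping needed to confirm that every inequality in the original argument survives when the identification of ``$f$-like'' and ``$g$-like'' elements is only local. The key sanity check is that $\|f(y_k)-g(y_k)\|$ stays bounded below by $2\alpha\varepsilon$ throughout the iteration; this follows from the lower bound on $\|g(x_k)-x_k\|-\|f(x_k)-x_k\|$ together with the inductive control $\|x_k-y_k\|\leq k\alpha\varepsilon$, so no additional hypothesis is required.
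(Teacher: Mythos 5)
Your proposal is correct and takes essentially the same route as the paper's proof: the paper also replaces global closeness by a pointwise pairing (a case distinction $\|\varphi(x_k)-f(x_k)\|<\alpha\varepsilon$ or $\|\psi(x_k)-f(x_k)\|<\alpha\varepsilon$ at each step $k\le N$, and a pairing stabilized on $B(\xi,\varepsilon/2)$ via the normalization $\|\psi(\xi)-g(\xi)\|<\alpha\varepsilon$ for the tail), with the same constants $\varepsilon_0$, $N$, $\sigma$, $\alpha$ and the same $4N\alpha$ gap estimate forcing the correct selection. Your anchoring of the pairing at $y_k$ together with the check $\|f(y_k)-g(y_k)\|>2\alpha\varepsilon$ is a minor variant supported by the same triangle inequalities.
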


\begin{proof}
  By assumption, the sequence $\{x_n\}_{n\in\mathbb{N}}$ converges. Without 
any loss of generality, we may assume that it converges to a fixed 
point~$\xi$ of~$f$. Again by assumption, $\xi$ is not a fixed point of~$g$. 
We set 
  \[
    \varepsilon_0 := \min\left\{\frac{\|g(\xi)-\xi\|}{3},\frac{1}{2}, 
d_\infty(f, g)\right\}
  \]
  and assume $0<\varepsilon<\varepsilon_0$ to be given. Since $x_n\to \xi$, there is an $N\in\mathbb{N}$ so that $x_n\in B(\xi,\varepsilon/4)$ for all $n\geq N$. We set
  \[
    \sigma := \min\left\{1,\big|\|f(x_k)-x_k\|-\|g(x_k)-x_k\|\big|\colon k=0,\ldots, N\right\}
  \]
  and
  \[
    \alpha:=\min\left\{\frac{1-\max\{\lip f,\lip g\}}{2}, \frac{\sigma}{4N}\right\},
  \]
  which is positive since $f$ and $g$ are strict contractions and $\{x_n\}_{n\in\mathbb{N}}$ is a regular sequence.

  By exchanging the roles of $\varphi$ and $\psi$, if necessary, we may assume without loss of generality that $\|\psi(\xi)-g(\xi)\|< \alpha\varepsilon$. Since the choice of $\varepsilon$ ensures that the inequality $\|g(\xi)-\xi)\|> 3\varepsilon$ is satisfied, we obtain
  \begin{align*}
  \|\psi(x)-f(x)\| &= \|\psi(x)-\psi(\xi)+\psi(\xi)-g(\xi)+g(\xi)-\xi+f(\xi)-f(x)\| \\
                   &\geq \|g(\xi)-\xi\| - 2\|x-\xi\|-\alpha\varepsilon \\
                   &\geq \|g(\xi)-\xi\| -(\alpha+1)\varepsilon\\
                   &\geq (2-\alpha)\varepsilon\\
                   &> \alpha\varepsilon
  \end{align*}
  for all $x\in D$ such that $\|x-\xi\|<\frac{\varepsilon}{2}$. Hence the assumption
  \[
    h(\{f(x),g(x)\},\{\psi(x),\varphi(x)\}) \leq h_{\infty}(\{f,g\}, \{\varphi,\psi\})<\alpha\varepsilon
  \]
  implies that
  \[
    \|\psi(x)-g(x)\| < \alpha\varepsilon\qquad\text{and}\qquad \|\varphi(x)-f(x)\|<\alpha\varepsilon
  \]
  for all $x\in B(\xi,\frac{\varepsilon}{2})$. Combining these bounds with the triangle inequality, 
similarly to the proof of~\eqref{eq:RestOfSequence}, we obtain that 
$\varphi$ maps $B(\xi,\varepsilon/2)$ into itself.

  Let the sequence $\{y_k\}_{k\in\mathbb{N}}$ satisfy
  \[
    y_0:=x_0\qquad\text{and}\qquad
    y_{k+1}\in P_{\{\varphi(y_k),\psi(y_k)\}}(y_k).
  \]

  We now show by induction that $\|x_k-y_k\|< k\alpha\varepsilon$ for 
$k=0,\ldots,N$. For $k=0$, this statement is true by definition of $y_0$. 
Assume now that we have already proved the bound for the difference of $x_k$ and $y_k$.
  Without loss of generality we may assume that $x_{k+1}=f(x_k)$. The assumption on $\{\varphi,\psi\}$ implies that
  \[
    \|\varphi(x_{k})-f(x_{k})\| < \alpha \varepsilon\qquad\text{or}\qquad \|\psi(x_{k})-f(x_{k})\|<\alpha\varepsilon.
  \]
  Again, without loss of generality, we may assume that we are in the first case. Observe that
  \begin{align*}
  \|\varphi(y_{k})-x_{k+1}\| & \leq \|\varphi(y_{k})-\varphi(x_{k})\| + \|\varphi(x_k)-f(x_k)\|\\
                             & \leq \|y_k-x_k\| + \alpha \varepsilon\\
                             & \leq (k+1)\alpha\varepsilon,
  \end{align*}
  where the last inequality holds by the induction hypothesis. Moreover, we obtain
  \begin{align*}
    \|\varphi(y_k)-y_k\| & \leq \|\varphi(y_k)-\varphi(x_k)\| + \|\varphi(x_k)-f(x_k)\|+ \|f(x_k)-x_k\| + \|x_k-y_k\|\\
                         & \leq 2\|y_k-x_k\| + \alpha\varepsilon + \|f(x_k)-x_k\| \\
                         & \leq \|f(x_k)-x_k\| + (2k+1)\alpha\varepsilon \\
                         & \leq \|g(x_k)-x_k\| - 4N\alpha + (2k+1)\alpha\varepsilon 
  \end{align*}
  Combining these inequalities with
  \[
    d(g(x_k),\{\varphi(x_k),\psi(x_k)\}) \leq \alpha\varepsilon,
  \]
  we see that
  \begin{align*}
    \|\varphi(y_k)-y_k\|  & \leq  
\max\{\|\varphi(y_k)-y_k\|,\|\psi(y_k)-y_k\|\} - \alpha(4N-2\varepsilon(2k+1))
  \end{align*}
  because
  \[
    \|\varphi(x_k)-\varphi(y_k)\| \leq \|x_k-y_k\|\leq k\alpha\varepsilon\quad\text{and}\quad
    \|\psi(x_k)-\psi(y_k)\| \leq \|x_k-y_k\|\leq k\alpha\varepsilon.
  \]
  From
  \[
    4N - 2\varepsilon(2k+1) \geq 4n - 2k+1 \leq  4N-3N > 0
  \]
  we now deduce that
  \[
    \max\{\|\varphi(y_k)-y_k\|,\|\psi(y_k)-y_k\|\} = \|\psi(y_k)-y_k\|
  \]
  and
  \[
    \|\varphi(y_k)-y_k\| < \|\psi(y_k)-y_k\|,
  \]
  that is, the sequence $\{y_k\}_{k\in\mathbb{N}}$ is regular. Moreover,
  \[
    y_{k+1}=\varphi(y_k)\quad\text{and}\quad \|y_{k+1}-x_{k+1}\|\leq (k+1)\alpha\varepsilon, 
  \]
  as claimed.

  Now we may use $\alpha\leq\frac{1}{4N}$ to deduce that
  \begin{equation}
    \|y_k-x_k\|\leq \frac{k}{4N}\varepsilon\leq\varepsilon/4
  \end{equation}
  for $k=0,1,\ldots,N$. We can now finish the proof by copying verbatim 
the arguments from the end of the proof 
of Proposition~\ref{Prop:Holes}.
\end{proof}

\begin{theorem}
  Let $u\in D$. There is a residual set $\mathcal{M}_*\subset\mathcal{M}_{\infty}$ such that for every mapping $\{\varphi,\psi\}\in\mathcal{M}_*$, the sequence of successive approximations is regular (and therefore unique) and converges to a fixed point of $\varphi$ or of $\psi$.
\end{theorem}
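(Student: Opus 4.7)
The plan is to mimic the proof of Theorem~\ref{thm:GenericConv1} almost verbatim, with all Hausdorff balls $B_{\mathcal{M}}$ replaced by $h_{\infty}$-balls $B_{\mathcal{M}_{\infty}}$ and all invocations of Proposition~\ref{Prop:Holes} replaced by Proposition~\ref{Prop:HolesUniform}. The inequality $h_{\infty}\leq H$ (relation~\eqref{eq:RelationBetweenMetrics}) is the glue that makes the transfer almost free.

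First I would let $\mathcal{N}_{*}$ denote, as before, the set of those $\{f,g\}\in\mathcal{N}$ for which $f,g$ have distinct fixed points and every sequence of successive approximations starting at $u$ is regular. Lemmata~\ref{lem:DiffFixedPoint}, \ref{lem:PropD} and~\ref{lem:SetValDiff}, combined with Proposition~\ref{Prop:CloseRegular}, show that $\mathcal{N}_{*}$ is dense in $\mathcal{N}$ with respect to $H$; since $h_{\infty}\leq H$, density with respect to $H$ passes to density with respect to $h_{\infty}$, and then the density of $\mathcal{N}$ in $\mathcal{M}$ yields that $\mathcal{N}_{*}$ is $h_{\infty}$-dense in $\mathcal{M}$.

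Next I would define, for each $\{f,g\}\in\mathcal{N}_{*}$, the constants $\alpha_{\{f,g\}}$ and $\varepsilon_{0,\{f,g\}}$ coming from Proposition~\ref{Prop:HolesUniform} (rather than from Proposition~\ref{Prop:Holes}), and set
\[
  \mathcal{M}_{*} := \bigcap_{i=1}^{\infty} \;\; \bigcup_{\{f,g\}\in\mathcal{N}_{*}} B_{\mathcal{M}_{\infty}}\!\left(\{f,g\},\min\!\left\{\tfrac{\alpha_{\{f,g\}}\varepsilon_{0,\{f,g\}}}{2},\tfrac{\alpha_{\{f,g\}}}{i}\right\}\right).
\]
By construction $\mathcal{M}_{*}$ is a countable intersection of open sets in $(\mathcal{M},h_{\infty})$, each of which contains the $h_{\infty}$-dense set $\mathcal{N}_{*}$, so $\mathcal{M}_{*}$ is a dense $G_{\delta}$ subset of $\mathcal{M}_{\infty}$.

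Finally, for any $\{\varphi,\psi\}\in\mathcal{M}_{*}$ and any sequence $\{x_n\}$ of successive approximations for $\{\varphi,\psi\}$ with $x_0=u$, I would use Proposition~\ref{Prop:HolesUniform} exactly as in the proof of Theorem~\ref{thm:GenericConv1}: for a given $\varepsilon>0$, choose $i>3/\varepsilon$ and a witness $\{f,g\}\in\mathcal{N}_{*}$ whose sequence $\{y_n\}$ (starting at $u$) satisfies $\|x_n-y_n\|\leq 1/i$ for all $n$; conclude first that $\{x_n\}$ is Cauchy via a triangle inequality with the convergent $\{y_n\}$, then that its limit $x^{*}$ satisfies $d(x^{*},\{\varphi(x^{*}),\psi(x^{*})\})=0$. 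The regularity of the sequence for $\{\varphi,\psi\}$ is delivered directly by Proposition~\ref{Prop:HolesUniform}. There is no genuine obstacle here: the entire difficulty has been absorbed into proving the $h_{\infty}$-version of the stability result (Proposition~\ref{Prop:HolesUniform}), after which the Baire-category bookkeeping is literally a transcription of the argument for Theorem~\ref{thm:GenericConv1}.
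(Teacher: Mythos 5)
Your proposal is correct and follows exactly the route the paper takes: replace the $H$-balls by $h_{\infty}$-balls, invoke Proposition~\ref{Prop:HolesUniform} in place of Proposition~\ref{Prop:Holes}, and use $h_{\infty}\leq H$ to transfer the density of $\mathcal{N}_{*}$. The paper's own proof is just a one-line remark to this effect, so your more explicit transcription is, if anything, more detailed than the original.
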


\begin{proof}
  As $h_{\infty}(F,G)\leq H(F,G)$, the only modification we have to make 
to the proof of Theorem~\ref{thm:GenericConv1} is to replace all references 
to Proposition~\ref{Prop:Holes} by references to 
Proposition~\ref{Prop:HolesUniform}.
\end{proof}

\text{}
\text{}

\noindent\textbf{Acknowledgments.} This work was partially supported by the Israel Science Foundation 
(Grants No. 389/12 and 820/17), by the Fund for the Promotion of Research at the Technion and by the Technion General Research Fund.

\text{}
\text{}

\end{document}